\tikzstyle{bubble}=[align=center, draw,inner sep=0pt,shape=ellipse,minimum height=1.25cm, minimum width=3cm]
\theoremstyle{plain}
\def\sideremark#1{\ifvmode\leavevmode\fi\vadjust{\vbox to0pt{\vss
\hbox to 0pt{\hskip\hsize\hskip1em%
\vbox{\hsize2cm\tiny\raggedright\pretolerance10000%
\noindent {\color{red}{#1}}\hfill}\hss}\vbox to8pt{\vfil}\vss}}}%
\newtheorem{thm}{Theorem}[section]
\newtheorem{prop}[thm]{Proposition}
\newtheorem{cor}[thm]{Corollary}
\newtheorem{lem}[thm]{Lemma}
\theoremstyle{definition}
\newtheorem{remk}[thm]{Remark}
\newcommand{\tn}[1]{\textnormal{#1}}
\newcommand{\wt}[1]{\widetilde{#1}}
\newcommand{\vocab}[1]{\textbf{#1}}
\newcommand{\vspan}{\tn{span}}
\newcommand{\diff}{\backslash}
\newcommand{\tr}{\textnormal{tr}}
\newcommand{\F}{\mathbb{F}}
\begin{document}

\title[Disconnected Cliques in Derangement Graphs]
{Disconnected Cliques in Derangement Graphs}
\author[Sara Anderson]{Sara Anderson}
\email{sara.anderson@cgu.edu}
\address{ 
Institite of Mathematical Sciences \\
Claremont Graduate University \\
Claremont, CA 91711\\
U.S.A}

\author[W. Riley Casper]{W. Riley Casper}
\email{wcasper@fullerton.edu}
\address{
Department of Mathematics \\
California State University \\
Fullerton, CA 92831 \\
U.S.A.}

\author[Sam Fleyshman]{Sam Fleyshman}
\email{fleyshmansl@csu.fullerton.edu}
\address{
Department of Mathematics \\
California State University \\
Fullerton, CA 92831 \\
U.S.A.}

\author[Matt Rathbun]{Matt Rathbun}
\email{mrathbun@fullerton.edu}
\address{
Department of Mathematics \\
California State University \\
Fullerton, CA 92831 \\
U.S.A.}
\date{}
\keywords{graph theory, spectral analysis, Cayley graphs, Latin squares, MOLS, representation theory, modular representation theory}
\subjclass[2020]{05C69, 05C40, 20C30}
\begin{abstract}
We obtain a correspondence between pairs of $N\times N$ orthogonal Latin squares and pairs of disconnected maximal cliques in the derangement graph with $N$ symbols.
Motivated by methods in spectral clustering, we also obtain modular conditions on fixed point counts of certain permutation sums for the existence of collections of mutually disconnected maximal cliques.
We use these modular obstructions to analyze the structure of maximal cliques in $X_N$ for small values of $N$.  We culminate in a short, elementary proof of the nonexistence of a solution to Euler's $36$ Officer Problem.
\end{abstract}

\maketitle
\section{Introduction}
In 1782, Euler published the first mathematical analysis of so-called Graeco-Latin squares, now known as orthogonal Latin squares, \emph{Recherches sur une Nouvelle Esp\`{e}ce de Quarr\'{e}s Magiques} \cite{euler}. He proved that orthogonal Latin squares of size $2n+1$ and $4n$ always exist, and hypothesized (incorrectly) that orthogonal Latin squares of size $4n+2$ were impossible. Intriguingly, it is only in sizes 2 and 6 that orthogonal Latin squares cannot exist. A rigorous proof that orthogonal Latin squares of size $N=6$ do not exist was claimed by Clausen \cite{Clausen}, but the manuscript is lost to history; the first verifiable proof of the insolvability of the problem is due to Tarry \cite{tarry}. Tarry's proof involved a case analysis of 17 families of Latin squares, and 9408 separate cases. Since then, several mathematicians and computer scientists have considered the problem. Pertaining to Euler's original conjecture, in 1984, Stinson \cite{stinson} provided a 3-page combinatorial (dis)proof. More recently, Chen and Wang in 2018 \cite{ChenWang} used quasi-difference matrices, and Ward in 2019 \cite{Ward} approached it using $(n,k)$-nets. 

While these proofs are short and elegant, they all rely on considerable expertise in the field of combinatorial design theory. We propose a novel perspective on mutually orthogonal Latin squares (MOLS) by demonstrating an original correspondence between pairs of $N \times N$ orthogonal Latin squares and pairs of disconnected maximal cliques in the derangement graph of $S_N$. We draw connections between MOLS and network analysis, casting the problems in a new light and thereby admitting additional, powerful tools into the area. In particular, spectral analysis provides naturally motivated insight into the clique structure of the derangement graph. As the kernel of the Laplacian matrix is associated with the connected components of a graph, highly interconnected regions separate from other interconnected regions in the derangement graph should align well with the eigenspace of the lowest eigenvalues of its Laplacian. As the representation theory of $S_N$ is well-understood, the eigendata of the derangement graph can be nicely leveraged in a way that, we hope, will be accessible to a wider range of mathematicians from different fields, bringing a wealth of knowledge about network analysis to bear on combinatorial questions about MOLS. 
 
Beyond Euler's original problem, constructions and investigations of sets of MOLS are ongoing areas of active research. Applications and interest include experimental design (\cite{VAG}, \cite{Mont}), coding and quantum information theory (\cite{dougherty}, \cite{HallRao}), and cryptography (\cite{HuaZhuChen}, \cite{Vaud}). Investigations into both the existence and construction of sets of higher order orthogonal Latin squares continues to be quite active (e.g., \cite{DrakeMyrvold}, \cite{ElMesShaaban}, \cite{RBCS}). As late as 2023, Egan and Wanless enumerated how many Latin squares (up to order 9) have orthogonal pairs (and how many they have) (\cite{EganWanless}). While an example of a pair of orthogonal Latin squares of order 10 graced the cover of \emph{Scientific American} in 1959 (\cite{SciAmer}), it remains unknown what the largest number of MOLS is --  there are at most 6, and Gill and Wanless recently made progress towards showing there are at most 3 (\cite{GillWanless}). Still, despite considerable efforts (\cite{McKayMeynertMyrvold}), no set of three MOLS of order 10 have yet been found.

It is expected that the lens of this paper and the techniques used can be harnessed for further analysis of questions related to higher order MOLS. We explore some of the immediate consequences for small values of $N$, and indicate, throughout the paper, potential for more elaborate applications. The heart of the paper is Section \ref{section:MaxCliques}, where we lay out the correspondence between disconnected maximal cliques in the derangement graph and orthogonal Latin squares, describe the spectral clustering motivation, and employ these ideas to provide modular obstructions to the existence of disconnected maximal cliques. In Section \ref{section:SmallN}, we showcase some of the conclusions that can be drawn in the toy cases of $N=3, 4,$ and $5$, to show the potential utility of the methods. Finally, in Section \ref{section:Euler36}, we turn to Euler's 36 Officer Problem, and provide a short, elementary proof with tools that will be more native to non-combinatorialists.


\section{Maximal Cliques in the Derangement Graph} \label{section:MaxCliques}
The \vocab{derangement graph} $X_N$ with symbols $\{1,\dots, N\}$ is the undirected graph whose vertices are the elements of $S_N$, where there is an edge between $\sigma,\tau\in S_N$ if and only if $\sigma\tau^{-1}$ is a derangement.
In other words, $X_N$ is the Cayley graph of the symmetric group $S_N$ with edges defined by derangements.

A \vocab{clique} in a graph $X$ is a collection of vertices whose induced subgraph is a complete subgraph of $X$.  Often, this induced subgraph is itself also referred to as a clique.  A clique is called a \vocab{maximal clique} if it is not a proper subgraph of any other clique in $X$.
We say that two cliques $C$ and $\wt C$ in $X$ are \vocab{disconnected} if there are no edges in $X$ between vertices in $C$ and vertices in $\wt C$.
Often the members of a clique will be enumerated, in which case we call the clique an \vocab{ordered clique}.

The derangement graph has the property that all maximal cliques have the same size.  This is equivalent to the fact that any partial Latin square can be completed, which was first proved by Hall in \cite{HallORS} and is a consequence of Hall's Marriage Theorem.
For convenience, we include an elementary proof here in our context of maximal cliques.
\begin{prop}
The maximal cliques in $X_N$ all have $N$ elements.
\end{prop}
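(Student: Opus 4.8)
The plan is to establish the statement in two halves: an upper bound showing no clique can exceed $N$ vertices, and an extension argument showing any clique of size less than $N$ fails to be maximal, so that maximality forces size exactly $N$.

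First I would unwind the adjacency relation. For $\sigma,\tau\in S_N$, the permutation $\sigma\tau^{-1}$ has a fixed point $j$ exactly when $\sigma(\tau^{-1}(j))=j$, i.e. writing $i=\tau^{-1}(j)$, exactly when $\sigma(i)=\tau(i)$; hence $\sigma\tau^{-1}$ is a derangement precisely when $\sigma(i)\neq\tau(i)$ for every $i\in\{1,\dots,N\}$. So $\sigma$ and $\tau$ are adjacent in $X_N$ iff they disagree in every coordinate. In particular, if $C$ is a clique then $\sigma\mapsto\sigma(1)$ is injective on $C$, since distinct members of $C$ disagree at position $1$; therefore $|C|\le N$.

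For the reverse inequality, suppose $C=\{\sigma_1,\dots,\sigma_k\}$ is a clique with $k<N$, and I would produce a permutation $\sigma_{k+1}\in S_N$ adjacent to every $\sigma_j$, contradicting maximality. For each position $i$ set $A_i=\{1,\dots,N\}\setminus\{\sigma_1(i),\dots,\sigma_k(i)\}$; the listed values are distinct because the $\sigma_j$ pairwise disagree at $i$, so $|A_i|=N-k\ge 1$. Choosing $\sigma_{k+1}$ with $\sigma_{k+1}(i)\in A_i$ for all $i$ amounts to finding a system of distinct representatives for $(A_1,\dots,A_N)$. The key point is that a fixed value $v$ lies in $A_i$ exactly when $\sigma_j^{-1}(v)\neq i$ for all $j$, and the numbers $\sigma_1^{-1}(v),\dots,\sigma_k^{-1}(v)$ are pairwise distinct (otherwise two members of $C$ would agree at some coordinate); hence each value lies in exactly $N-k$ of the sets $A_i$. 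Thus the bipartite "availability" graph on positions and values is $(N-k)$-regular, and double-counting the edges out of any set $S$ of positions gives $(N-k)|S|\le (N-k)\,\bigl|\bigcup_{i\in S}A_i\bigr|$, so $\bigl|\bigcup_{i\in S}A_i\bigr|\ge|S|$. Hall's condition holds, so Hall's Marriage Theorem furnishes the desired SDR, hence $\sigma_{k+1}$, and $\{\sigma_1,\dots,\sigma_{k+1}\}$ is a strictly larger clique.

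Combining the two halves, every maximal clique has exactly $N$ elements; such cliques exist, for instance the cyclic group generated by the $N$-cycle $(1\,2\,\cdots\,N)$, any two distinct elements of which differ by a nontrivial power of an $N$-cycle, hence a derangement. The only step carrying genuine content is the verification of Hall's condition for the availability graph; the rest is bookkeeping about the derangement adjacency relation. One could alternatively invoke the fact that every regular bipartite graph has a perfect matching, but routing through Hall's theorem keeps the proof elementary and self-contained.
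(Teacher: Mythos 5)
Your proof is correct and follows essentially the same route as the paper: both bound the clique size by $N$ via injectivity at a single coordinate, define the same availability sets of unused values at each position, and verify Hall's condition by the same regularity/double-counting observation (the paper phrases it as bounding fiber sizes of a projection map). The only cosmetic difference is that you invoke Hall's Marriage Theorem to produce the system of distinct representatives, whereas the paper inlines an explicit greedy construction of it.
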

\begin{proof}
Suppose that $C = \{\sigma_i\}_{i=1}^m\subseteq S_N$ is a maximal clique in $X_N$.
Then $\sigma_i(1)\neq \sigma_j(1)$ for all $1\leq i < j\leq m$, and therefore $m\leq N$.

Now suppose that $m < N$.
For each $1\leq k\leq N$, define
$$B_k = \{1,\dots, N\}\diff \{\sigma_1(k),\dots,\sigma_m(k)\}.$$
Then, for each $k$, $B_k$ contains $N-m$ elements.
Furthermore, each integer between $1$ and $N$ occurs in exactly $N-m$ of the sets $B_1,\dots, B_N$.
Therefore, for every subset $I\subseteq \{1,\dots, N\}$, the fibers of the function
$$\{(x,k): k\in I,\ x\in B_k\}\rightarrow\bigcup_{j\in I} B_j,\ \ (x,k)\mapsto x$$
have cardinality at most $N-m$.
Hence $\bigcup_{i\in I}B_i$ has cardinality at least that of $I$.

Let $i_1 = 1$ and choose an element $b_1\in B_{i_1}$.
More generally, suppose $i_1,\dots, i_k$ and $b_1,\dots, b_k$ are defined.
Then for $I_k = \{1,\dots, N\}\backslash\{i_1,\dots, i_k\}$, the set $\bigcup_{i\in I_k} B_i$ has at least $N-k$ elements.
Choose $b_{k+1}\in \bigcup_{i\in I_k}B_i$ and let $i_{k+1}\in I_k$ with $b_{k+1}\in B_{i_{k+1}}$.
Then the function $\sigma$ defined by
$$\sigma: i_k\mapsto b_k,\quad 1\leq k\leq N$$
is a permutation and by definition $\sigma(k)\neq \sigma_j(k)$ for all $1\leq j \leq m$ and $1\leq k\leq N$.
Hence $C\cup\{\sigma\}$ is a clique, contradicting the maximality of $C$.
\end{proof}

\subsection{Latin Squares and Disconnected Maximal Cliques}
In this section, we provide a novel correspondence between pairs of orthogonal maximal cliques in $X_N$ and pairs of orthogonal Latin squares.
We start by reviewing the basic definition.

An $N\times N$ \vocab{Latin square} is an $N\times N$ matrix $A$ with each number $1,\dots, N$ appearing exactly once in every row or column.
Two Latin squares $A$ and $B$ are said to be \vocab{orthogonal} if the set of ordered pairs $\{(A_{ij},B_{ij}): 1\leq i,j\leq N\}$ has exactly $N^2$ elements.
In other words, when we overlay $A$ and $B$, every ordered pair $(a,b)$ with $1\leq a,b\leq N$ shows up exactly one time.
More generally, a collection of Latin squares is said to be \vocab{mutually orthogonal} if every pair in the collection is an orthogonal pair.

Given a pair $(A,B)$ of orthogonal $N\times N$ Latin squares, we define a pair $(C,\wt C)$ of disconnected ordered maximal cliques in $X_N$ by setting
$$C = \{\sigma_1,\dots,\sigma_N\},\quad\text{and}\quad \wt C = \{\omega_1,\dots,\omega_N\},$$
where here $\sigma_j$ and $\omega_j$ are the permutations defined by
$$\sigma_j: A_{jk}\mapsto B_{jk},\quad\text{and}\quad \omega_j: A_{kj}\mapsto B_{kj},\quad \text{for all}\ \ 1\leq j,k\leq N.$$
Call this correspondence $\Gamma : (A, B) \mapsto (C, \wt{C})$.

\begin{remk}
There is a different correspondence between Latin squares and ordered maximal cliques in $X_N$ that might appear more obvious, where we associate each row with the permutation it defines.  However, this correspondence does not send orthogonal pairs of Latin squares to disconnected cliques.  
\end{remk}

We will show that $\Gamma$ is a bijection.
To start, notice that differences between vertices in disconnected maximal cliques must each fix a unique point.
\begin{lem} \label{lem:nearderangements}
Suppose that $C = \{\sigma_i\}_{i=1}^N$ and $\wt C = \{\omega_i\}_{i=1}^N$ are two disconnected maximal cliques.  Then for any $1\leq i,j\leq N$, the permutation $\sigma_i\omega_j^{-1}$ has exactly one fixed point.
\end{lem}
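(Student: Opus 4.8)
The plan is to prove the lemma by a global double-counting argument: I will count the total number of ``agreements'' between the two cliques in two different ways. First I record the Latin-square structure hidden inside a single clique. Because $C$ is a clique, for any $i\neq i'$ the difference $\sigma_i\sigma_{i'}^{-1}$ is a derangement, so $\sigma_i(\ell)\neq\sigma_{i'}(\ell)$ for every $\ell$; hence, for each fixed $\ell$, the list $\sigma_1(\ell),\dots,\sigma_N(\ell)$ is a permutation of $\{1,\dots,N\}$, and likewise $\omega_1(\ell),\dots,\omega_N(\ell)$ is a permutation of $\{1,\dots,N\}$ since $\wt C$ is a clique. Next I translate fixed points into agreements: for fixed $i,j$, the permutation $\sigma_i\omega_j^{-1}$ is conjugate to $\omega_j^{-1}\sigma_i$ (conjugate by $\sigma_i$), so they have the same number of fixed points, and $\omega_j^{-1}\sigma_i$ fixes $\ell$ exactly when $\sigma_i(\ell)=\omega_j(\ell)$. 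Therefore
\[
f(i,j) := \#\{m : \sigma_i\omega_j^{-1}(m)=m\} = \#\{\ell : \sigma_i(\ell)=\omega_j(\ell)\}.
\]

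The core of the argument is then to compute $\sum_{i,j} f(i,j)$ in two ways. Summing the displayed identity over all $i,j$ and exchanging the order of summation gives $\sum_{i,j} f(i,j) = \sum_{\ell=1}^N \#\{(i,j): \sigma_i(\ell)=\omega_j(\ell)\}$, and for each fixed $\ell$, by the first step, every value $v\in\{1,\dots,N\}$ is attained by a unique $\sigma_i(\ell)$ and a unique $\omega_j(\ell)$, so the inner count equals exactly $N$; hence $\sum_{i,j} f(i,j) = N^2$. On the other hand, disconnectedness of $C$ and $\wt C$ means no $\sigma_i$ is joined to any $\omega_j$, i.e.\ $\sigma_i\omega_j^{-1}$ is never a derangement, so $f(i,j)\geq 1$ for each of the $N^2$ pairs $(i,j)$. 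A sum of $N^2$ integers, each at least $1$, that totals $N^2$ forces every term to equal $1$, which is precisely the assertion that $\sigma_i\omega_j^{-1}$ has exactly one fixed point.

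I do not expect a genuine obstacle here; the argument is short, and the only points requiring care are bookkeeping ones: making sure the fixed-point count of $\sigma_i\omega_j^{-1}$ really matches the number of columns on which $\sigma_i$ and $\omega_j$ agree, and making sure disconnectedness is invoked in the correct direction (it supplies the lower bound $f(i,j)\geq 1$, not an upper bound). If one prefers to avoid the conjugacy remark, one can argue the identity directly: $\sigma_i\omega_j^{-1}$ fixes $m$ iff $\sigma_i(\omega_j^{-1}(m))=m$, and writing $\ell=\omega_j^{-1}(m)$ this says $\sigma_i(\ell)=\omega_j(\ell)=m$, so $m\mapsto\omega_j^{-1}(m)$ is a bijection between the fixed points of $\sigma_i\omega_j^{-1}$ and the set of $\ell$ with $\sigma_i(\ell)=\omega_j(\ell)$.
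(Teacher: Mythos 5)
Your proof is correct. It is a genuine proof, all steps check out: the bijection $m\mapsto\omega_j^{-1}(m)$ between fixed points of $\sigma_i\omega_j^{-1}$ and agreement positions $\{\ell:\sigma_i(\ell)=\omega_j(\ell)\}$ is right, the count of $N$ agreeing pairs $(i,j)$ per position $\ell$ follows from the Latin-square column structure of each clique, and disconnectedness is used in the correct direction to give $f(i,j)\geq 1$. Your route differs from the paper's in a way worth noting. The paper argues locally: it fixes $i$, observes that the fixed-point sets $B_{ij}=\{a:\sigma_i\omega_j^{-1}(a)=a\}$ are pairwise disjoint as $j$ varies (because $\omega_j\omega_k^{-1}$ is a derangement), nonempty by disconnectedness, and contained in a set of size $N$, so each is a singleton. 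That argument uses only that $\wt C$ is a clique, and handles one $\sigma_i$ at a time. Your argument is global: you sum the $N^2$ fixed-point counts at once and use the clique property of \emph{both} $C$ and $\wt C$ to pin the total at $N^2$. Both are elementary pigeonhole-type arguments of comparable length; the paper's is slightly more economical in its hypotheses, while yours makes explicit the ``each column of each clique is a permutation of $\{1,\dots,N\}$'' structure that the paper exploits later when building the Latin squares $A$ and $B$, so it arguably previews the theorem that follows.
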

\begin{proof}
For each $1\leq i,j\leq N$, let $B_{ij} = \{1\leq a\leq N: \sigma_i\omega_j^{-1}(a) = a\}$.
Since $C$ and $\wt C$ are disconnected, $B_{ij}$ is nonempty.
It follows that $\bigcup_j B_{ij}$ has at least $N$ elements and at most $N$ elements.  Moreover, if $j\neq k$, then $\omega_j\omega_k^{-1}$ is a derangement and consequently $B_{ij}\cap B_{ik}=\varnothing$.
It follows that each $B_{ij}$ is a singleton set.
\end{proof}

For convenience, we will call a permutation that fixes exactly one element a \textbf{near-derangement}. 

Using the previous lemma, we can establish a correspondence from ordered maximal cliques to orthogonal Latin squares inverting the previous correspondence.
\begin{thm}
The correspondence $\Gamma: (A,B)\mapsto (C,\wt C)$ is a bijection between pairs $(A,B)$ of $N\times N$ orthogonal Latin squares and pairs of disconnected ordered maximal cliques.
\end{thm}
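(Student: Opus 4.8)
The plan is to produce an explicit two-sided inverse $\Delta$ to $\Gamma$. Lemma \ref{lem:nearderangements} tells us what $\Delta$ must be: given a pair of disconnected ordered maximal cliques $(C,\wt C)=(\{\sigma_i\}_{i=1}^N,\{\omega_i\}_{i=1}^N)$, the near-derangement $\sigma_i\omega_j^{-1}$ has a unique fixed point, which we declare to be $B_{ij}$, and we then set $A_{ij}=\sigma_i^{-1}(B_{ij})$. The single observation that makes everything work is that the fixed-point equation $\sigma_i\omega_j^{-1}(B_{ij})=B_{ij}$ is the same as $\sigma_i^{-1}(B_{ij})=\omega_j^{-1}(B_{ij})$, so $A_{ij}$ admits the symmetric description $A_{ij}=\sigma_i^{-1}(B_{ij})=\omega_j^{-1}(B_{ij})$; I would record this identity first and invoke it repeatedly.

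First I would check that $\Delta$ actually lands in pairs of orthogonal Latin squares. That $B$ is a Latin square comes straight from the clique hypotheses: a repetition $B_{ij}=B_{ij'}$ in a row would make $\sigma_i^{-1}$ (hence $\omega_j^{-1}$ and $\omega_{j'}^{-1}$) agree at a point, so $\omega_{j'}\omega_j^{-1}$ would have a fixed point, contradicting that $\wt C$ is a clique, and a repetition $B_{ij}=B_{i'j}$ in a column would force $\sigma_{i'}\sigma_i^{-1}$ to have a fixed point, contradicting that $C$ is a clique. That $A$ is a Latin square is then immediate from the two formulas for $A_{ij}$: along row $i$ the entries $B_{ij}$ run over $\{1,\dots,N\}$ and $\sigma_i^{-1}$ permutes them, while along column $j$ the entries $B_{ij}$ run over $\{1,\dots,N\}$ and $\omega_j^{-1}$ permutes them. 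Finally, orthogonality: if $(A_{ij},B_{ij})=(A_{i'j'},B_{i'j'})$, say with common value $(a,b)$, then $\sigma_i(a)=b=\sigma_{i'}(a)$ forces $\sigma_i\sigma_{i'}^{-1}$ to fix $b$, so $i=i'$, and $\omega_j(a)=b=\omega_{j'}(a)$ forces $j=j'$; hence the $N^2$ pairs $(A_{ij},B_{ij})$ are distinct.

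Next I would verify the two composites are identities. For $\Delta\circ\Gamma=\id$: starting from orthogonal Latin squares $(A,B)$ with associated cliques $(C,\wt C)$, the value $B_{ij}$ occupies position $(i,j)$ both in row $i$ and in column $j$ of $B$, so $\sigma_i^{-1}(B_{ij})=A_{ij}=\omega_j^{-1}(B_{ij})$, i.e. $\sigma_i\omega_j^{-1}$ fixes $B_{ij}$; by Lemma \ref{lem:nearderangements} this is its only fixed point, so $\Delta$ reads off $B_{ij}$ correctly and then $A_{ij}=\sigma_i^{-1}(B_{ij})$. For $\Gamma\circ\Delta=\id$: given $(C,\wt C)$ and the Latin squares $(A,B)$ produced by $\Delta$, we have $\sigma_i(A_{ik})=B_{ik}$ by definition of $A$, and since $\{A_{ik}:1\leq k\leq N\}=\{1,\dots,N\}$ this determines $\sigma_i$ as the permutation $\Gamma$ assigns to row $i$; similarly $A_{kj}=\omega_j^{-1}(B_{kj})$ gives $\omega_j(A_{kj})=B_{kj}$, recovering $\omega_j$ from column $j$. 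I would also note in passing that $\Gamma$ is well defined, i.e. that $(C,\wt C)$ really consists of disconnected maximal cliques: orthogonality of $A,B$ makes each $\sigma_i\sigma_{i'}^{-1}$ with $i\ne i'$ a derangement (a shared fixed point would produce a repeated ordered pair), so $C$ is a clique of $N$ distinct permutations and hence maximal by the proposition above, and likewise $\wt C$; and the fixed-point computation above shows $\sigma_i\omega_j^{-1}$ is never a derangement, so $C$ and $\wt C$ are disconnected.

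Most of this is bookkeeping. The one genuinely load-bearing point, which I would isolate at the outset, is that disconnectedness of the cliques says exactly that $\sigma_i^{-1}$ and $\omega_j^{-1}$ agree at the single point $B_{ij}$, giving the symmetric formula $A_{ij}=\sigma_i^{-1}(B_{ij})=\omega_j^{-1}(B_{ij})$. The step requiring the most care is the orthogonality of $A$ and $B$ under $\Delta$, since that is where both clique conditions — on $C$ and on $\wt C$ — get used simultaneously.
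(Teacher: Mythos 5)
Your proposal is correct and follows essentially the same route as the paper: both construct the inverse map by using Lemma \ref{lem:nearderangements} to read off $B_{ij}$ as the unique fixed point of $\sigma_i\omega_j^{-1}$ and $A_{ij}=\sigma_i^{-1}(B_{ij})=\omega_j^{-1}(B_{ij})$, then verify the Latin, orthogonality, and inverse properties from the clique conditions. You simply spell out in more detail the steps the paper leaves as ``easy to check'' (well-definedness of $\Gamma$ and the two composite identities).
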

\begin{proof}
Suppose that $C=\{\sigma_i\}_{i=1}^N$ and $\wt C=\{\omega_i\}_{i=1}^N$ are two disconnected ordered maximal cliques.
For any pair of integers $1\leq i,j\leq N$, let $a_{ij}$ and $b_{ij}$ be the unique integers in $\{1,\dots,N\}$ satisfying
$$\sigma_i(a_{ij}) = b_{ij}\quad\text{and}\quad\omega_j(a_{ij}) = b_{ij}.$$
Define $A$ and $B$ to be the matrices with entries $A_{ij} = a_{ij}$ and $B_{ij} = b_{ij}$.
Since $\sigma_i\sigma_k^{-1}$ is a derangement, $a_{ij}\neq a_{kj}$ for $i\neq k$.  Similarly, $a_{ij}\neq a_{ik}$, $b_{ij}\neq b_{kj}$ and $b_{ij}\neq b_{ik}$.
Therefore $A$ and $B$ are Latin squares.
If $a_{ij}=a_{k\ell}$ and $b_{ij}=b_{k\ell}$, then $\sigma_i\sigma_k^{-1}$ fixes $b_{k\ell}$, which is a contradiction.
Therefore $A$ and $B$ are orthogonal.
It is easy to check that the function $\Omega: (C,\wt C)\rightarrow (A,B)$ and $\Gamma$ are inverses.  Thus $\Gamma$ is a bijection.
\end{proof}

\subsection{Eigendata and Spectral Clustering}
One really appealing idea to come out of the relation between orthogonal Latin squares and disconnected maximal cliques is our ability to rephrase the problem of finding mutually orthogonal Latin squares as a graph clustering problem.
Graph clustering is a standard problem in network analysis where we seek to partition the vertices of a graph into cliques or more general collections of vertices called \vocab{clusters} with the property that inside each cluster, the induced subgraph has more edges relative to the number of edges occurring between different clusters.

Recall that the \vocab{Laplacian} of a graph $X$ is $L=D-A$, where $D$ and $A$ are the degree matrix and adjacency matrix of $X$, respectively.
One very effective technique for graph clustering is \vocab{spectral clustering}, wherein the eigenvectors and eigenvalues of the Laplacian matrix are leveraged in order to find natural splits in the graph structure.
The intuition for this comes from the characterization of the kernel of the Laplacian.

The indicator functions of the connected components of a disconnected graph form a basis of the kernel of its Laplacian.
Therefore if $C$ is a part of a graph $X$ which is strongly interconnected, but weakly connected to the rest of the graph, the indicator function of $C$ should look almost like an element of the kernel of the Laplacian.
In particular, we expect its expansion in terms of the eigendata of the Laplacian to be concentrated in the eigenvalues close to zero.
Alternatively, it should be close to orthogonal to the eigenvectors with largest eigenvalue.

The derangement graph is a Cayley graph with the property that the generators of the edge set (derangements) are closed under conjugation.
In this case, the matrix entries of irreducible representations form eigenfunctions of the Laplacian $L_N$ of $X_N$, viewed as an operator acting on $L^2(S_N)$.
There are several sources of this fact about the spectral data of Cayley graphs for representations over $\mathbb{C}$ \cite{babai,diaconis}.
The result may be extended to other fields with a little bit of care and some additional assumptions about the characteristic of the field.  For completeness, we include a proof of this fact over an arbitrary field now.

\begin{thm}
Let $\F$ be a field and $\pi: S_N\rightarrow M_r(\F)$ be a representation of $S_N$ over $\F$, with $\pi$ irreducible over the algebraic closure $\overline \F$.
Then the matrix entries $\pi_{jk}: S_N\rightarrow \F$ are all eigenfunctions of the Laplacian $L_N$ of the derangement graph $X_N$ of $S_N$.
If $\text{char}(\F)\nmid r$, the corresponding eigenvalue is independent of $j,k$ and given by $\lambda_\pi = \lvert \Delta_N\rvert - \frac{1}{r}\sum_{\sigma\in \Delta_N} \tr(\pi(\sigma))$, where here $\Delta_N$ is the set of derangements in $S_N$.
\end{thm}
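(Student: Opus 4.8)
The plan is to realize the Laplacian $L_N = |\Delta_N|\,\mathrm{Id} - A$ as a linear operator on the space $\F^{S_N}$ of $\F$-valued functions on $S_N$, where $A$ is the adjacency operator of $X_N$ and $\Delta_N$ the set of derangements. Since $X_N$ is the Cayley graph of $S_N$ with connection set $\Delta_N$, the edge relation $\sigma\tau^{-1}\in\Delta_N$ gives $(Af)(\sigma) = \sum_{\delta\in\Delta_N} f(\delta^{-1}\sigma)$. The first observation is that the formal sum $z := \sum_{\delta\in\Delta_N}\delta$ is a \emph{central} element of the group algebra $\F[S_N]$: having no fixed point is a conjugation-invariant property, so $\Delta_N$ is a union of conjugacy classes; for the same reason $\Delta_N = \Delta_N^{-1}$.

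Next I would compute $A\pi_{jk}$ directly. Expanding $\pi(\delta^{-1}\sigma) = \pi(\delta^{-1})\pi(\sigma)$ entrywise and summing over $\delta\in\Delta_N$ (using $\Delta_N = \Delta_N^{-1}$ to replace $\sum_\delta\pi(\delta^{-1})$ by $\pi(z)$) gives
\[
(A\pi_{jk})(\sigma) = \sum_{\ell=1}^r \pi(z)_{j\ell}\,\pi(\sigma)_{\ell k}.
\]
Because $z$ is central, $\pi(z)$ commutes with every $\pi(\sigma)$, so it lies in $\mathrm{End}_{\F[S_N]}(\F^r)$. The key input is that for an \emph{absolutely} irreducible representation this endomorphism algebra is just the scalar matrices $\F\cdot\mathrm{Id}_r$: by Schur's lemma it is a finite-dimensional division algebra over $\F$, and base-changing to $\overline\F$ identifies it with $\mathrm{End}_{\overline\F[S_N]}(\overline\F^{\,r}) = \overline\F$ by ordinary Schur's lemma over an algebraically closed field, forcing $\F$-dimension $1$. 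Hence $\pi(z) = c\,\mathrm{Id}_r$ for some $c\in\F$, the display collapses to $(A\pi_{jk})(\sigma) = c\,\pi_{jk}(\sigma)$, and so each matrix entry $\pi_{jk}$ is an eigenfunction of $A$, hence of $L_N$, with eigenvalue $|\Delta_N| - c$ — which manifestly does not depend on $j$ or $k$.

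It then remains only to identify $c$ when $\mathrm{char}(\F)\nmid r$. Taking traces in $\pi(z) = c\,\mathrm{Id}_r$ gives $\sum_{\delta\in\Delta_N}\tr(\pi(\delta)) = \tr(\pi(z)) = cr$, and since $r$ is invertible in $\F$ we solve $c = \tfrac1r\sum_{\sigma\in\Delta_N}\tr(\pi(\sigma))$, yielding $\lambda_\pi = |\Delta_N| - \tfrac1r\sum_{\sigma\in\Delta_N}\tr(\pi(\sigma))$. I expect the only genuine subtlety to be the characteristic-free form of Schur's lemma: mere irreducibility over $\F$ would leave a division algebra rather than $\F$ itself, so absolute irreducibility is essential, and one must be careful that the bookkeeping in the adjacency-operator convention uses $\Delta_N = \Delta_N^{-1}$ at the right moment. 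Everything else is routine.
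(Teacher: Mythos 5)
Your argument is correct and follows essentially the same route as the paper: the matrix $\pi(z)=\sum_{\delta\in\Delta_N}\pi(\delta)$ is exactly the matrix $Q$ the paper introduces, the commutation with every $\pi(\omega)$ comes from $\Delta_N$ being a union of conjugacy classes in both proofs, and the scalar conclusion via Schur's lemma over $\overline{\F}$ matches the paper's eigenspace-invariance argument. The trace computation identifying $c$ under the hypothesis $\mathrm{char}(\F)\nmid r$ is also identical.
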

\begin{proof}
Suppose that $\pi: S_N\rightarrow M_r(\mathbb{\F})$ is an irreducible representation over $\overline{\F}$.
The Laplacian of the derangement graph is $L_N = \lvert\Delta_N\rvert I-A_N$ for $A_N$ the adjacency matrix of $X_N$.
The action of $L_N$ on the function space $L^2(S_N)$ is given by
$$(L_N f)(\tau) = \sum_{\sigma\in \Delta_N} f(\sigma\tau).$$

Consider the matrix $Q = \sum_{\sigma\in\Delta_N}\pi(\sigma)$.
The $(j,k)$-entry of $\pi$ satisfies
$$(L_N\pi_{jk})(\tau) = \sum_{\sigma\in\Delta_N}\pi_{jk}(\sigma\tau) = \left(Q\pi(\tau)\right)_{jk}.$$
Since $\Delta_N$ is closed under conjugation,
$$Q = \sum_{\sigma\in \omega^{-1}\Delta_N\omega}\pi(\sigma) = \sum_{\sigma\in\Delta_N}\pi(\omega\sigma\omega^{-1}) = \pi(\omega)Q\pi(\omega)^{-1}$$
for all $\omega\in S_N$.
Consequently, the eigenspaces of $Q$ are $\pi$-invariant submodules of $\overline \F^r$.
Since $\pi$ is irreducible, there are no nontrivial proper submodules.
Thus any eigenspace must be the whole space, making $Q = cI$ for some $c\in \F$.
Moreover, by taking the trace of $Q$, we find $c = \frac{1}{r}\sum_{\sigma\in \Delta}\tr(\pi(\sigma))$.
It follows that $L_N\pi_{jk} = \lambda_\pi \pi_{jk}$ with $\lambda_\pi$ defined as in the statement of the theorem.
\end{proof}

In the case of the derangement graph $X_N$, the largest real eigenvalue of the Laplacian is $\lambda_{std} = D_N\left(1+\frac{1}{N-1}\right)$, where $D_N$ is the number of derangements of $S_N$.
The associated eigenspace is spanned by the matrix entries of the standard representation of $S_N$.
This suggests that we should consider disconnected cliques through the lens of their projections onto the eigenspace of the standard representation.

Explicitly, we are motivated to consider the projection, $P_{std}$, of functions onto the eigenspace of the standard representation (non-normalized, as the normalization would involve dividing by $N$, and we will soon be considering finite fields):
$$P_{std}: L^2(S_N)\rightarrow L^2(S_N),\quad P_{std}(f): \tau\mapsto \sum_{\sigma \in S_N} \sum_{1\leq j,k\leq N}f(\sigma)(\pi_{std}(\sigma))_{jk}(\pi_{std}(\tau))_{jk}.$$

In practice, no information is lost by projecting onto the span of the eigenspace of the standard representation and the trivial representation, and this so-called \textbf{natural representation} will simplify the exposition considerably. We will thus work with the (non-reduced) natural representation, which associates each permutation with the corresponding permutation matrix in $M_N(\F)$.
In this case, the indicator function $1_B$ of a subset $B\subseteq S_N$ satisfies
$$P_{nat}(1_B): \tau\mapsto \sum_{\sigma\in B} \tr(\pi_{nat}(\sigma\tau^{-1})) = \sum_{\sigma\in B} n(\sigma;\tau),$$
where here
\begin{equation}\label{eqn:n}
n(\sigma;\tau) = \text{\# (fixed points of $\sigma\tau^{-1}$)}.
\end{equation}

\begin{remk} Here, spectral clustering led us to focus on the standard (or natural) representation. 
However, there is great potential utility in projections onto other eigenspaces for further explorations into mutually orthogonal Latin squares of higher order. 
\end{remk}

\subsection{Modular Obstructions to Existence}

A key insight appearing in \cite{stinson} is that working over $\mathbb{F}_2$ leads to interesting obstructions to the existence of pairs of orthogonal Latin squares.
This leads us to consider the \emph{modular} natural representation of $S_N$ over a finite field $\F$. 
When the characteristic of $\F$ does not divide $N$, the dimension of the image $P_{nat}(L^2(S_N))$ of the projection $P_{nat}$ is $(N-1)^2+1$.
However, in the ``modular setting" when the characteristic divides $N$, this dimension suddenly drops to $(N-1)^2+4-2N$.
This leads to interesting linear dependencies between the projections of elements of $L^2(S_N)$ that do not exist in the non-modular setting.
Note that as an abuse of notation, for any field $\F$ we will still write $L^2(S)$ to represent $\F$-valued functions defined on a finite set $S$, even when $\F$ has positive characteristic.

To start, suppose that we have a collection of $r$ mutually disconnected maximal cliques $C_1,\dots, C_r$.
Then we consider the span of the collection
$$\mathcal B = \left\lbrace P_{nat}(1_\sigma): \sigma\in\bigcup_i C_i\right\rbrace.$$
The sum over any maximal clique $C_i$ of the projections is the constant function:
$$\sum_{\sigma\in C_i}P_{nat}(1_\sigma):\tau\mapsto N.$$
If the characteristic of $\F$ divides $N$, this gives us $r$ linear dependencies coming from the $r$ cliques, which we call the \vocab{clique dependencies}.  The clique dependencies show the subspace spanned by $\mathcal B$ will have dimension at most $r(N-1)$ over $\F$.

However, in this modular setting, one can sometimes show that the dimension must be even smaller.  This leads to \vocab{modular obstructions}, additional conditions on expressions calculating fixed points modulo this characteristic.
This is the main utility of the following theorem.
\begin{thm}\label{modular obstruction}
Let $\F$ be a finite field whose characteristic divides $N$.
Suppose that $\{C_i\}_{i=1}^r$ is a collection of disconnected maximal cliques in $X_N$.  Then the span of
$$\mathcal B = \left\lbrace P_{nat}(1_\sigma): \sigma\in\bigcup_{i=1}^{r} C_i\right\rbrace\subseteq L^2(S_N)$$
has dimension at most $(N^2-4N+r+5)/2$.
\end{thm}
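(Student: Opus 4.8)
The plan is to bound $\dim\vspan(\mathcal B)$ by understanding the image $V = P_{nat}(L^2(S_N))$ as an $\F[S_N]$-module and then combining this with the clique dependencies in a way that exploits a hidden symmetric bilinear form. First I would recall that, since $\vspan(\mathcal B)$ is a subspace of $V$ and the modular natural representation has $\dim V = (N-1)^2 + 4 - 2N = N^2 - 4N + 5$ when $\tn{char}(\F)\mid N$, the clique dependencies alone only give the weaker bound $r(N-1)$. To do better I would introduce the bilinear form on $L^2(S_N)$ given by $\langle f, g\rangle = \sum_{\sigma,\tau} f(\sigma)g(\tau)\, n(\sigma;\tau)$ — equivalently, $\langle 1_\sigma, 1_\tau\rangle = n(\sigma;\tau) = \tr(\pi_{nat}(\sigma\tau^{-1}))$ — which is exactly the Gram form attached to $P_{nat}$. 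The key observation is that $P_{nat}(1_\sigma)$ and $P_{nat}(1_\tau)$ have the same image under the form pairing as $1_\sigma, 1_\tau$, so the Gram matrix of $\mathcal B$ with respect to $\langle\cdot,\cdot\rangle$ is the $|\bigcup C_i|\times|\bigcup C_i|$ matrix $M$ with entries $n(\sigma;\tau)$, and $\dim\vspan(\mathcal B) = \tn{rank}(M)$ provided the form is nondegenerate on $V$ (which holds because $V$ is the orthogonal complement of the radical and in the modular case one checks $V$ decomposes with the form identifying it appropriately — this needs a short argument).

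Next I would analyze the block structure of $M$. Order the vertices clique-by-clique. Within a single clique $C_i = \{\sigma_1,\dots,\sigma_N\}$, the diagonal block has $n(\sigma_a;\sigma_a) = N$ on the diagonal and $n(\sigma_a;\sigma_b) = 0$ off-diagonal (since $\sigma_a\sigma_b^{-1}$ is a derangement), so each diagonal block is $N\cdot I_N$, which over $\F$ (characteristic dividing $N$) is the \emph{zero} matrix. For an off-diagonal block between $C_i$ and $C_j$, Lemma~\ref{lem:nearderangements} tells us every entry $n(\sigma_a;\omega_b)$ equals $1$: the block is the all-ones $N\times N$ matrix $J$, which has rank $1$. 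Thus, modulo char $\F$, the Gram matrix $M$ is the $rN\times rN$ block matrix whose diagonal blocks vanish and whose off-diagonal $(i,j)$ block is $J$; equivalently $M = (K\otimes J_N)$ where $K$ is the $r\times r$ matrix with $0$ on the diagonal and $1$ elsewhere. Since $\tn{rank}(J_N) = 1$ and $\tn{rank}(K) = \tn{rank}(J_r - I_r)$ is $r$ if $r\not\equiv 1$ and $r-1$ if $r\equiv 1$ modulo char $\F$, we get $\tn{rank}(M)\le r$ — a constant bound, which is \emph{too strong}, so somewhere the identification of $\tn{rank}(M)$ with $\dim\vspan(\mathcal B)$ must fail: the form is degenerate on $\vspan(\mathcal B)$, meaning $\vspan(\mathcal B)$ meets the radical of $\langle\cdot,\cdot\rangle|_V$ nontrivially.

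This is where the real work lies, and it is the main obstacle. The correct bookkeeping is: $\dim\vspan(\mathcal B) \le \tn{rank}(M) + \dim\big(\vspan(\mathcal B)\cap \tn{rad}\big)$, and separately $\dim\vspan(\mathcal B) \le rN - (\text{number of independent clique dependencies})$. The plan is to identify the radical of the form restricted to $V$ explicitly — in the modular natural representation the radical is related to the trivial/sign submodules and has dimension roughly $2(N-2)$ coming from the collapse $\dim V$ drops by $2N-4$ — and to show that the clique dependencies, the all-ones vector, and the radical interact so that the two bounds combine to give $\dim\vspan(\mathcal B)\le (N^2 - 4N + r + 5)/2$. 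Concretely I expect the argument to run: let $W = \vspan(\mathcal B)$; the form on $W$ has rank $\le r$ by the block computation, so $W/(W\cap\tn{rad}(W))$ has dimension $\le r$ where $\tn{rad}(W)$ is the radical of $\langle\cdot,\cdot\rangle|_W$; meanwhile $\tn{rad}(W) \subseteq \tn{rad}(\langle\cdot,\cdot\rangle|_V) =: R$ with $\dim R = \frac12\big((N^2-4N+5) - (\text{rank of form on } V)\big)$, and one shows $\dim(W\cap R)\le \dim R$ together with the complementary estimate $\dim W \le \dim R + r$... and then a dimension count pinning $\dim R$ in terms of $N$ yields the stated bound after checking the arithmetic $\dim R + r \le (N^2-4N+r+5)/2$, i.e. $\dim R \le (N^2 - 4N + 5)/2 - (r - r)/\cdots$. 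The delicate point — and the step I expect to consume most of the proof — is computing $\tn{rank}$ of the Gram form of $P_{nat}$ on all of $L^2(S_N)$ over a field of characteristic dividing $N$, equivalently the rank of the $N!\times N!$ matrix $[n(\sigma;\tau)]$ mod $p$; this is a character-theoretic / modular-representation computation using that $\pi_{nat} = \mathbf 1 \oplus \pi_{std}$ only virtually in the modular case, and that the Gram rank equals $\sum_\chi (\dim\chi)\cdot[\chi\text{ appears}]$ adjusted for the characteristic-$p$ failure of semisimplicity. Once that rank is in hand, everything else is the linear-algebra assembly sketched above.
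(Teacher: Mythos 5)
Your central computation is correct and is in fact the same one the paper exploits: with respect to the trace form on $P_{nat}(L^2(S_N))$ (equivalently $\langle P_{nat}(1_\sigma),P_{nat}(1_\tau)\rangle = n(\sigma;\tau)$), the Gram matrix $M$ of $\mathcal B$ has zero diagonal blocks and all-ones off-diagonal blocks, so $M=(J_r-I_r)\otimes J_N$ and $\tn{rank}(M)\leq r$. But the proof as written has a genuine gap at exactly the point where you sense trouble. Your claim that $\dim\vspan(\mathcal B)=\tn{rank}(M)$ ``provided the form is nondegenerate on $V$'' is false: nondegeneracy of a form on an ambient space says nothing about its restriction to a subspace, and indeed here the restriction to $W=\vspan(\mathcal B)$ is highly degenerate. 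The correct relation is $\dim W = \tn{rank}(M) + \dim\bigl(W\cap W^{\perp}\bigr)$, where $W^{\perp}$ is computed inside $P_{nat}(L^2(S_N))$. You state a version of this but then never close the argument; the final paragraph trails off into an unresolved dimension count and proposes computing the modular rank of the full $N!\times N!$ matrix $[n(\sigma;\tau)]$ via modular character theory. That computation is unnecessary. The only fact you need about the ambient space is that the trace form is \emph{nondegenerate on} $P_{nat}(L^2(S_N))$, which is already implicit in the preceding theorem: that proof realizes $P_{nat}(L^2(S_N))$ as $V/W$ with $W=V\cap V^{\perp}$ the radical, so the induced form on the quotient is nondegenerate. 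Granting that, $\dim(W\cap W^{\perp})\leq\dim W^{\perp} = \dim P_{nat}(L^2(S_N)) - \dim W$, hence
$$2\dim W \leq \tn{rank}(M) + \dim P_{nat}(L^2(S_N)) \leq r + (N^2-4N+5),$$
which is the stated bound. So the missing idea is small but essential: replace the attempted identification of $\dim W$ with $\tn{rank}(M)$ by the inequality above, and replace the proposed radical computation on $L^2(S_N)$ by the (already available) nondegeneracy on the image of $P_{nat}$.

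For comparison, the paper packages the same idea slightly differently: it augments each vector to $(P_{nat}(1_\sigma), e_{i(\sigma)})\in P_{nat}(L^2(S_N))\oplus\F^r$, where $i(\sigma)$ indexes the clique containing $\sigma$, observes that subtracting $(0,\vec 1)$ from the second coordinate of each generator produces vectors orthogonal to the whole span (the $+1$ from different-clique pairs cancels against the $-1$ from $e_i\cdot(e_j-\vec 1)$), and concludes $\dim V\leq\dim V^{\perp}$ with $\dim V+\dim V^{\perp}$ equal to the ambient dimension $N^2-4N+5+r$. This is an isotropic-subspace argument that absorbs the rank-$r$ defect of your Gram matrix into $r$ extra ambient dimensions; your route, once repaired as above, keeps the defect explicit as $\tn{rank}(M)\leq r$ and arrives at the identical inequality.
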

Before proving this theorem, it is worth noting the kinds of obstructions this provides when we are dealing with just a pair of disconnected maximal cliques $C$ and $\wt C$ (i.e., when $r=2$).
In this case, for the obstruction to determine any kind of additional relation, we must have $2\leq N\leq 6$.
Then the number of elements of $\mathcal B$ will be larger than the dimension of its span by more than one, leading to linear dependencies other than the clique dependencies.
The number of expected non-clique dependencies is summarized in Table \ref{tab:num deps}.
Once $N$ exceeds $6$, the guaranteed number of dependencies falls to zero.

\begin{table}[h]
    \centering
    \begin{tabular}{c|c|c|c}
      $N$ & $|\mathcal B|$ & $\dim\vspan\mathcal B$ & \# non-clique dep.\\\hline
       3 & 6  & $\leq 2$  & $\geq 2$\\
       4 & 8  & $\leq 3$  & $\geq 3$\\
       5 & 10 & $\leq 6$  & $\geq 2$\\
       6 & 12 & $\leq 9$  & $\geq 1$
    \end{tabular}
    \caption{Expected number of non-clique dependencies for a pair of disconnected maximal cliques.  Each dependency induces a modular obstruction to the existence of the cliques.}
    \label{tab:num deps}
\end{table}

\begin{remk}
When $r > 2$, we obtain interesting dependencies for higher values of $N$.
For example, the corresponding modular obstructions may be a route to determining the existence of larger collections of mutually orthogonal Latin squares for $N=10$, which is an open problem.
\end{remk}

Each dependency corresponds to a function $f: \bigcup_{i} C_i\rightarrow \F$, which we may assume is not constant on any clique (since we excluded clique dependencies), satisfying
$$\sum_{i=1}^r\sum_{\sigma\in C_i}f(\sigma) P_{nat}(1_\sigma) = 0\in \F.$$
Evaluating this on $\tau\in S_N$ gives
$$\sum_{i=1}^r\sum_{\sigma\in C_i}f(\sigma) n(\sigma;\tau) = 0\mod\text{char}(\F),$$
for $n(\sigma;\tau)$ defined in Equation \eqref{eqn:n}.
We will explore the role of each of these obstructions in more detail in the next sections.

In order to prove Theorem \ref{modular obstruction}, we need to first show that the dimension of the image $P_{nat}(L^2(S_N))$ of the projection operator $P_{nat}$ drops in the modular setting.
\begin{thm}
Suppose that $\text{char}(\F)$ divides $N$.  Then
$$\dim P_{nat}(L^2(S_N)) = (N-1)^2-2N + 4.$$
\end{thm}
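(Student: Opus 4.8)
The plan is to compute the dimension of $P_{nat}(L^2(S_N))$ directly by understanding $P_{nat}$ as (a scalar multiple of) the projection onto the isotypic components appearing in the natural permutation representation, and then to track how these components behave when $\mathrm{char}(\F) = p \mid N$. Over a field of characteristic zero or prime to $N$, the permutation module $\F^N$ decomposes as the trivial representation plus the standard representation, so $P_{nat}(L^2(S_N))$ is spanned by the $N^2$ matrix entries of the natural representation together with the constant function; the matrix entries of an irreducible $r$-dimensional representation are linearly independent, and a short count gives $\dim = (N-1)^2 + 1$. When $p \mid N$, this fails: the all-ones vector $\mathbf{1} \in \F^N$ lies in the standard submodule $V_0 = \{x : \sum x_i = 0\}$, so the natural module is no longer semisimple. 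First I would set up the precise statement that $P_{nat}(1_\sigma)(\tau) = \tr(\pi_{nat}(\sigma\tau^{-1}))$, so that $P_{nat}(L^2(S_N))$ is exactly the span of the functions $\tau \mapsto \tr(\pi_{nat}(\sigma\tau^{-1}))$, i.e. the span of all matrix coefficients of $\pi_{nat}$ (this is the space of matrix coefficients of the two-sided $S_N \times S_N$ action on $\pi_{nat}$).

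The key computation is then to find $\dim_{\F}\, \mathrm{span}\{ \text{matrix coefficients of } \pi_{nat}\}$, which by general representation theory equals $\sum_j (\dim W_j)^2 / (\text{something})$ only in the semisimple case; in the modular case I would instead identify this span with the image of the natural map $\pi_{nat}^* \otimes \pi_{nat} \to L^2(S_N)$ and compute its dimension as $(\dim \pi_{nat})^2$ minus the dimension of the kernel. Concretely, the natural module $M = \F^N$ has the filtration $0 \subset \F\mathbf{1} \subset V_0 \subset M$ with three composition factors: trivial, trivial, and the simple module $D$ of dimension $N-2$ (the heart of the Specht module $S^{(N-1,1)}$ in characteristic $p \mid N$). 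The space of matrix coefficients is the image of $\End_{\F}(M) \to L^2(S_N)$ whose kernel is the set of endomorphisms killed by the trace pairing against all group elements; I would argue this kernel is exactly $\Hom_{S_N}(M, M)^{\perp}$-complement — more precisely, the span of matrix coefficients has dimension $\dim M^2 - \dim(\text{radical of the trace form on } \End(M))$.

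The cleanest route is probably this: the span of matrix coefficients of $M$ equals the span of matrix coefficients of $M/\mathrm{rad}$ together with cross terms, but since matrix coefficients only see composition factors \emph{with multiplicity given by the structure}, I would instead directly count using the fact that $L^2(S_N) = \bigoplus (\text{blocks})$ and the natural representation lives in the principal block. An honest approach: realize $P_{nat}(L^2(S_N))$ as $\mathrm{span}\{ c_{\phi} : \phi \in \End_\F(M)\}$ where $c_\phi(\tau) = \tr(\phi \circ \pi_{nat}(\tau))$, a linear image of $\End_\F(M) \cong M_N(\F)$; its dimension is $N^2 - \dim K$ where $K = \{\phi : \tr(\phi\,\pi_{nat}(\tau)) = 0 \ \forall \tau\}$. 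Since the $\pi_{nat}(\tau)$ span the image of the group algebra $\F[S_N]$ in $\End_\F(M)$, we get $\dim K = N^2 - \dim(\text{image of } \F[S_N] \text{ in } M_N(\F))$, so in fact $\dim P_{nat}(L^2(S_N)) = \dim(\F[S_N] \to \End_\F(M))$. I would then compute this image dimension from the submodule structure: with composition factors $\mathbf{1}, \mathbf{1}, D$ ($\dim D = N-2$) and a known non-split structure (uniserial or with the two trivials as socle and head), the image of $\F[S_N]$ in $\End_\F(M)$ has dimension $1 + 1 + (N-2)^2 + (\text{extension contributions}) = (N-2)^2 + 2 + 2(N-2) = (N-1)^2 + 1 - \dots$; carrying this out carefully should land on $(N-1)^2 - 2N + 4 = N^2 - 4N + 5$.

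The main obstacle I anticipate is pinning down the exact module structure of the natural permutation module $M = \F^N$ when $p \mid N$ and computing the dimension of the image of $\F[S_N]$ in $\End_\F(M)$ from it — in particular handling the self-extensions of the trivial module and whether the $(N-2) \times (N-2)$ "diagonal" block contributes its full $(N-2)^2$ or less. Concretely I would use that $\dim \mathrm{Hom}_{S_N}(M,M) = 2$ (the identity and the rank-one map $x \mapsto (\sum x_i)\mathbf 1$, which here since $p \mid N$ satisfies an extra relation), whereas in the non-modular case it is $2$ as well but the module is semisimple — the drop of $2N - 2$ from $(N-1)^2+1$ down to $(N-1)^2 - 2N + 4$ must come precisely from the matrix coefficients of the "standard" part collapsing because its simple quotient $D$ has dimension $N-2$ rather than $N-1$, contributing $(N-2)^2$ in place of $(N-1)^2$, i.e. a drop of $2N - 3$, with a compensating $+1$ from an extra cross-term. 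Verifying this bookkeeping against the claimed formula, and making the extension-term analysis rigorous (e.g. via $\mathrm{Ext}^1_{S_N}(\mathbf 1, \mathbf 1)$ and $\mathrm{Ext}^1_{S_N}(\mathbf 1, D)$ computations, or by an explicit basis of $\F[S_N] \to M_N(\F)$), is where the real work lies; everything else is a routine dimension count.
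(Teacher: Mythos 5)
There is a genuine gap at the very first reduction, and it is fatal to the route as described. You identify $P_{nat}(L^2(S_N))$ with $\mathrm{span}\{c_\phi : \phi\in\End_\F(M)\}$, the span of \emph{all} matrix coefficients of $\pi_{nat}$. But $P_{nat}(1_\sigma)(\tau)=\tr(\pi_{nat}(\sigma)\pi_{nat}(\tau)^{-1})$, so the image of $P_{nat}$ is spanned only by the functions $c_{\pi_{nat}(\sigma)}$; that is, it is the image under $\phi\mapsto c_\phi$ of the subspace $V=\mathrm{span}\{\pi_{nat}(\sigma):\sigma\in S_N\}$ (the matrices with constant row and column sums, of dimension $(N-1)^2+1$ over any field), not of all of $M_N(\F)$. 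Your duality computation is correct for the full map: since the trace pairing on $M_N(\F)$ is nondegenerate, $\dim\mathrm{span}\{c_\phi:\phi\in M_N(\F)\}=N^2-\dim V^\perp=(N-1)^2+1$, and this is characteristic-independent. The quantity actually needed is $\dim c(V)=\dim V-\dim(V\cap\ker c)=\dim V-\dim(V\cap V^\perp)$. In the semisimple case $V\cap V^\perp=0$ and the two computations agree; when $\mathrm{char}(\F)\mid N$ the radical $V\cap V^\perp$ is nonzero (it is spanned by the differences $R_i-R_1$ and $C_j-C_1$ of row- and column-constant matrices and has dimension $2N-3$), and this radical is precisely the source of the drop the theorem asserts. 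Your setup discards it.

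The downstream consequence is that the number you propose to compute, the dimension of the image of $\F[S_N]$ in $\End_\F(M)$, equals $(N-1)^2+1=N^2-2N+2$ in every characteristic (the permutation matrices span the space of constant-row-and-column-sum matrices already over $\mathbb{Z}$), so no amount of bookkeeping with the composition factors $\mathbf{1},\mathbf{1},D$ will land it on $N^2-4N+5$; indeed your own intermediate count $(N-2)^2+2+2(N-2)$ simplifies to $N^2-2N+2$. The repair is to work with the restricted map: show that $W=\mathrm{span}\{R_i-R_1,\,C_j-C_1\}$ has dimension $2N-3$, lies in $V^\perp$ always, and lies in $V$ exactly when $N=0$ in $\F$, while $R_1-C_1\in V^\perp\setminus V$; then $V\cap V^\perp=W$ and $\dim P_{nat}(L^2(S_N))=(N-1)^2+1-(2N-3)$. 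This is the paper's argument, and it requires no modular representation theory of $S_N$ at all.
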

\begin{proof}
Let $V\subseteq M_N(\F)$ be the subspace of matrices whose row and column sums
are all the same.
This is the $\F$-span of the set of permutation matrices $\{\pi_{nat}(\sigma): \sigma\in S_N\}$.
The $\F$-vector space $M_N(\F)$ has a natural bilinear form $\langle A,B\rangle = \text{tr}(AB^T)$. 
The dimension of $V$ is $(N-1)^2+1$, so by the rank-nullity theorem, the dimension of the orthogonal complement $V^\perp$ in $M_N(\F)$ is $2N-2$.

Now consider the set
$$W = \text{span}\{R_2-R_1,\dots, R_N-R_1,C_2-C_1,\dots, C_N-C_1\},$$
where $R_j$ and $C_j$ are the matrices with all $1$'s in row $j$ or column $j$, respectively, and zeros elsewhere.
The dimension of $W$ is $2N-3$ and $W\oplus\text{span}\{R_1-C_1\}\subseteq V^\perp$ so $W\oplus\text{span}\{R_1-C_1\}=V^\perp$.
Since we are working in a field where $N=0$, we also see that $W\subseteq V$, and since $R_1-C_1\notin V$, we must have $W=V\cap V^\perp$.

If $f\in L^2(S_N)$, then $P_{nat}(f)$ is the function
$$P_{nat}(f): \tau\mapsto \sum_{\sigma\in S_N} f(\sigma)\tr(\pi_{nat}(\sigma\tau^{-1})).$$
The binlinear form on $M_N(\F)$ combined with $\pi_{nat}$ induces a linear transformation
$$V\mapsto L^2(S_N),\ \ A\mapsto f_A,\ \ \text{where}\ \ f_A(\tau) := \tr(A\pi_{nat}(\tau)^T).$$
In particular, the image of this transformation is $P_{nat}(L^2(S_N))$.
Since the permutation matrices span $V$, the kernel is exactly $V\cap V^\perp=W$.
Thus, it induces an $\F$-vector space isomorphism $V/W\cong P_{nat}(L^2(S_N))$.
The statement of the theorem follows immediately.
\end{proof}

Using this, we can now provide a simple proof of Theorem \ref{modular obstruction}.
\begin{proof}[Proof of Theorem \ref{modular obstruction}]
Consider the vector space
$$V = \text{span}\left\lbrace (P_{nat}(1_\sigma),1_{C_1}(\sigma),\dots, 1_{C_r}(\sigma)): \sigma\in \bigcup_jC_j\right\rbrace\subseteq P_{nat}(L^2(S_N))\oplus \F^r.$$
Let $\vec 1\in \F^r$ be the vector of all $1$'s.
If $\sigma,\wt\sigma\in \bigcup_jC_j$ belong to the same clique, then the inner product of $P_{nat}(1_\sigma)$ and $P_{nat}(1_{\wt{\sigma}})$ is $0$ in $\F$.
Otherwise, if they belong to different cliques, the inner product is $1$.
Thus if $(f,\vec v)\in V$, then $(f,\vec v-\vec 1)\in V^\perp$, so $\dim V^\perp \geq \dim V$.
It follows by rank-nullity that
$$2\dim(V)\leq \dim(V)+\dim(V^\perp) = \dim P_{nat}(L^2(S_N))+r.$$
Thus
$$\dim(V) \leq (N^2+r-4N+5)/2.$$
\end{proof}

\section{Structure of Maximal Cliques for Small $N$}
\label{section:SmallN}
While the maximal clique structures in the derangement graphs when $N \leq 5$ are digestible with software (or in some cases, by hand calculation), and the composition of orthogonal Latin squares is well-understood, these cases provide a fertile testing ground for the techniques presented here, and perhaps motivation for further analysis of less charted territory.

\subsection{Maximal cliques in $X_3$}
The existence of two non-clique dependencies in $L^2(S_3)$ from Theorem \ref{modular obstruction} leads immediately to the following corollary.

\begin{cor} \label{cor:X3}
If there exist disconnected maximal cliques $C$ and $\wt C$ in $X_3$, then there exist two distinct non-constant functions $f_1,f_2: C\cup\wt C\rightarrow \{0,1, 2\}$ each satisfying the property that for all $ \tau\in S_3$,
$$\sum_{\sigma\in C\cup \wt C} f_i(\sigma)n(\sigma;\tau) = 0\mod 3.$$
\end{cor}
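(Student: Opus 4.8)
The plan is to apply Theorem~\ref{modular obstruction} directly in the case $N=3$, $r=2$, working over $\F = \F_3$ (whose characteristic $3$ divides $N=3$). First I would observe that $\mathcal B = \{P_{nat}(1_\sigma) : \sigma \in C \cup \wt C\}$ has $|C \cup \wt C| = 6$ elements, since $C$ and $\wt C$ are disjoint: if they shared a vertex $\sigma$, then $\sigma \sigma^{-1} = \id$ is certainly not a derangement, but more to the point, being disconnected means no edges between them, yet a vertex is trivially "connected" to its own clique, and the two cliques each have $3$ elements by the Proposition, so any overlap would force an edge between $C$ and $\wt C$ (through the shared vertex's neighbors within a clique) --- I would state disjointness cleanly, noting that a common vertex $\sigma \in C \cap \wt C$ would, together with any $\tau \in C$ with $\tau \neq \sigma$, give an edge from $\wt C$ to $C$, contradicting disconnectedness (using $N = 3 > 1$ so such a $\tau$ exists).

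Next, Theorem~\ref{modular obstruction} with $N=3$, $r=2$ gives $\dim \vspan \mathcal B \leq (9 - 12 + 2 + 5)/2 = 4/2 = 2$. So the $6$ vectors in $\mathcal B$ span a space of dimension at most $2$, hence the space of linear dependencies among them --- i.e., the kernel of the map $\F_3^{C \cup \wt C} \to L^2(S_3)$ sending the basis vector $e_\sigma$ to $P_{nat}(1_\sigma)$ --- has dimension at least $6 - 2 = 4$. Within this $4$-dimensional space of dependencies sit the two clique dependencies $1_C$ and $1_{\wt C}$ (the constant-on-each-clique functions), which are linearly independent since $C, \wt C$ are disjoint and nonempty. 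Quotienting out their span leaves a space of dimension at least $4 - 2 = 2$ of dependencies modulo clique dependencies. I would then pick two dependency functions $f_1, f_2$ whose images form a basis of this quotient; these are automatically linearly independent, hence distinct, and non-constant on each clique (after subtracting an appropriate scalar multiple of $1_C$ and $1_{\wt C}$, one can normalize so that $f_i$ is not constant on $C$ nor on $\wt C$ --- any dependency constant on both cliques lies in $\vspan\{1_C, 1_{\wt C}\}$ and was already quotiented away). Finally, unwinding the meaning of a dependency: $\sum_{\sigma} f_i(\sigma) P_{nat}(1_\sigma) = 0$ evaluated at $\tau$ reads $\sum_{\sigma \in C \cup \wt C} f_i(\sigma)\, n(\sigma;\tau) = 0$ in $\F_3$, using the formula $P_{nat}(1_B)(\tau) = \sum_{\sigma \in B} n(\sigma;\tau)$ established in the text.

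The main obstacle --- really the only subtlety --- is the bookkeeping around non-constancy: one must be careful that "at least two dependencies beyond the clique dependencies" genuinely yields two functions that are each non-constant on each of $C$ and $\wt C$, and that they can be taken with values in $\{0,1,2\}$ (which is immediate since $\F_3 = \{0,1,2\}$ and we may scale representatives freely). I would handle this by working in the quotient $\ker / \vspan\{1_C, 1_{\wt C}\}$: a nonzero element of this quotient, lifted to any representative, is non-constant on $C$ \emph{or} on $\wt C$, but to get non-constant on \emph{both} simultaneously I would argue that among the $\geq 2$-dimensional quotient there is a basis $\{\bar f_1, \bar f_2\}$ and then, if necessary, replace $\bar f_2$ by $\bar f_1 + \bar f_2$ or $\bar f_1 - \bar f_2$ to clear any residual constancy on one clique --- since over $\F_3$ at most finitely many (in fact at most two) nonzero combinations can be constant on a given clique, and the quotient has $3^2 - 1 = 8$ nonzero elements, there is ample room. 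Alternatively, and more cleanly, one can note that the clique dependencies already account for all functions constant on each clique, so the $\geq 2$ remaining dimensions, restricted to $C$, inject into the $2$-dimensional space of mean-zero functions on a $3$-element set, and similarly for $\wt C$; a dimension count then shows a generic element of the quotient is non-constant on both. This is routine linear algebra over $\F_3$ and presents no real difficulty.
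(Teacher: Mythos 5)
Your proposal is correct and is essentially the paper's own argument: the paper derives Corollary~\ref{cor:X3} ``immediately'' from Theorem~\ref{modular obstruction} via exactly the dimension count you perform ($|\mathcal B|=6$, $\dim\vspan\mathcal B\le 2$, minus the two clique dependencies, leaving at least two independent non-clique dependencies), as recorded in Table~\ref{tab:num deps}. Your extra care about non-constancy on \emph{each} clique goes beyond what the corollary as stated requires (non-constancy on $C\cup\wt C$ already follows from lying outside $\vspan\{1_C,1_{\wt C}\}$), but it is harmless.
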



In this case, of course, we know that there \emph{do} exist two disconnected maximal cliques, $C$ and $\wt{C}$, and that the six permutations comprising them account for the entirety of $S_3$. 
We may, however, see conditions on the functions $f_1$ and $f_2$ that will prove useful later.

Observe that we may assume $e \in C$, so that all permutations in $C$ have zero fixed points (mod 3), and those in $\wt{C}$ have one. Then, in particular, $\sum_{\sigma \in \wt{C}} f_i(\sigma) = 0 \mod 3$. 

By taking $\tau$ to be any near-derangement from $\wt{C}$, we could conclude that $C\tau^{-1}$ consists of near-derangements, $\wt{C}\tau^{-1}$ consists of the identity and two derangements, so that $\sum_{\sigma \in C} f_i(\sigma) = 0 \mod 3$, as well.

This is, naturally, the full extent of the information that can be gained; exactly two such functions do exist (up to constant multiples, and cyclic permutation), and this provides no obstruction to the existence of the two cliques.


\subsection{Maximal cliques in $X_4$} 
The analogue of Corollary \ref{cor:X3} for $X_4$ is the following.

\begin{cor}
\label{cor:X4}
If there exist disconnected maximal cliques $C$ and $\wt C$ in $X_4$, then there exist three distinct proper subsets $R_1,R_2,R_3\subset C\cup\wt C$, each satisfying the property that for all $\tau\in S_4$,
$$\sum_{\sigma\in R_i} n(\sigma;\tau) = 0\mod 2.$$
\end{cor}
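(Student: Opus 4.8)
The plan is to invoke Theorem~\ref{modular obstruction} with $\F = \F_2$, $N = 4$, and $r = 2$, and then unpack the resulting dimension bound into the stated congruences. With these values the theorem gives $\dim \vspan \mathcal B \leq (16 - 16 + 2 + 5)/2 = 7/2$, hence $\dim \vspan \mathcal B \leq 3$ since the dimension is a nonnegative integer. Here $\mathcal B = \{P_{nat}(1_\sigma) : \sigma \in C\cup\wt C\}$, and $\vspan \mathcal B$ is exactly the image of the $\F_2$-linear map
$$\Phi \colon \F_2^{\,C\cup\wt C} \to L^2(S_4), \qquad \Phi(f) = \sum_{\sigma \in C\cup\wt C} f(\sigma)\, P_{nat}(1_\sigma),$$
so the real content is a dimension count for $\ker\Phi$, carried out as follows.

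First I would check that $C$ and $\wt C$ are disjoint, so that the domain of $\Phi$ has dimension $8$: if some $\sigma$ lay in both cliques, then by Lemma~\ref{lem:nearderangements} the permutation $\sigma\sigma^{-1} = e$ would have exactly one fixed point, which is absurd for $N = 4$. Hence $|C\cup\wt C| = 8$, and by rank--nullity $\dim\ker\Phi \geq 8 - 3 = 5$. Next I would peel off the two clique dependencies: since $\sum_{\sigma\in C} P_{nat}(1_\sigma)$ is the constant function $N = 4 \equiv 0$ in $\F_2$ (and likewise over $\wt C$), the indicator vectors $1_C$ and $1_{\wt C}$ lie in $\ker\Phi$, and they are $\F_2$-linearly independent because $C$ and $\wt C$ are nonempty with disjoint supports. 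Therefore $\ker\Phi$ modulo $W := \vspan\{1_C, 1_{\wt C}\}$ still has dimension at least $5 - 2 = 3$, and I can pick $f_1, f_2, f_3 \in \ker\Phi$ whose images in $\ker\Phi/W$ are linearly independent.

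Finally I would translate back. Put $R_i = \{\sigma \in C\cup\wt C : f_i(\sigma) = 1\}$. Because $f_i \notin W$ we have in particular $f_i \neq 1_{C\cup\wt C} = 1_C + 1_{\wt C}$, so $R_i \subsetneq C\cup\wt C$ is a proper subset; because the images of the $f_i$ are distinct in $\ker\Phi/W$, the $f_i$ — hence the $R_i$ — are pairwise distinct, and none of them is $C$ or $\wt C$. Evaluating $\Phi(f_i) = 0$ at any $\tau\in S_4$ and using $P_{nat}(1_\sigma)(\tau) = n(\sigma;\tau)$ gives
$$0 = \sum_{\sigma\in C\cup\wt C} f_i(\sigma)\, n(\sigma;\tau) = \sum_{\sigma\in R_i} n(\sigma;\tau) \quad\text{in } \F_2,$$
which is precisely the asserted congruence modulo $2$.

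I do not expect any serious obstacle here: the whole argument is the bookkeeping above. The one place that wants a little care is making sure the $R_i$ produced are genuinely new — proper, pairwise distinct, and distinct from the clique dependencies $C$ and $\wt C$ — which is exactly why one works with $\ker\Phi/W$ rather than $\ker\Phi$ itself. (The resulting count of at least $3$ non-clique dependencies matches the $N=4$ row of Table~\ref{tab:num deps}.)
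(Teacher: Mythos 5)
Your proof is correct and is essentially the argument the paper intends: the paper states Corollary~\ref{cor:X4} without a written proof, treating it as an immediate consequence of Theorem~\ref{modular obstruction} with $N=4$, $r=2$, $\F=\F_2$ (the $N=4$ row of Table~\ref{tab:num deps}), and your writeup just makes explicit the dimension count, the two clique dependencies, and the passage from kernel elements modulo $\vspan\{1_C,1_{\wt C}\}$ to the subsets $R_i$. The bookkeeping you flag --- disjointness of $C$ and $\wt C$, and working in $\ker\Phi/W$ to guarantee the $R_i$ are proper, distinct, and not the clique dependencies themselves --- is exactly the right way to fill in what the paper leaves implicit.
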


Suppose that two disconnected maximal cliques $C$ and $\wt C$ exist. In this case, we can glean a great deal of information about these subsets $R_i$, and, in turn, this imparts a complete picture of the maximal cliques in $X_4$.

Let us isolate one of the $R_i$, and let $R = R_i \cap C$ and $\wt R = R_i \cap \wt C$. By translating, we can assume $e\in R$. Then, as in the case of $X_3$, $R$ consists of the identity and derangements, and $\wt R$ consists of near-derangements. By taking $\tau$ to be the identity and one of the near-derangements of $\wt R$, respectively, we see that both \begin{equation}
    \label{eqn:wtR}
    \sum\limits_{\sigma \in \wt R}n(\sigma;e) = 0 \mod 2, \mbox{ and}
\end{equation} 
\begin{equation}
    \label{eqn:R}
    \sum\limits_{\sigma \in R}n(\sigma;e) = 0\mod 2.
\end{equation}

In particular, then, $R$ and $\wt R$ each have two elements, and satisfy:
\begin{itemize}
\item $R = \{e,\delta\}$ for some derangement $\delta$, 
\item $\wt R = \{\eta_1, \eta_2\}$ for some near-derangements $\eta_1, \eta_2$,
\item $\eta_1\delta^{-1}$ and $\eta_2\delta^{-1}$ are near-derangements and $\eta_1\eta_2^{-1}$ is a derangement.
\end{itemize}

For $j= 1, 2$, let $a_j$ and $c_j$ be the fixed points of $\eta_j$ and $\eta_j\delta^{-1}$, respectively.
This implies that $\delta(b_j) = c_j$ and $\eta_j(b_j) = c_j$ for some $b_j$.  Since $\delta$ is a derangement, it must be a 4-cycle or a 2,2-cycle. We consider the case where $\delta$ is a 4-cycle. Up to inversion, $\delta$ must take the form of $(a_1 \enspace x \enspace a_2 \enspace y)$ or $(a_1 \enspace a_2 \enspace x \enspace y)$ for some values $x,y\in \{1, 2, 3, 4\} \setminus \{a_1,a_2\}$. Now, consider $\tau_1 = (a_1 \enspace a_2)$ and $\tau_2 = (a_1 \enspace x)$. Observe that for $k=1, 2$, 
\[\sum\limits_{\sigma \in R \cup \wt{R}} n(\sigma;\tau_k) = n(\delta; \tau_k) \mod 2.\]
These $\tau$ restrict the form of $\delta$ in a contradictory manner, as neither form results in an even number of fixed points for both $\delta\tau_1^{-1}$ and $\delta\tau_2^{-1}$.
We conclude that $\delta$ cannot be a 4-cycle.

Now, we note that same argument applies for each of $R_1, R_2$, and $R_3$. All three of the subsets cannot contain $e$ and $\delta$, or the resulting dependencies together with the clique dependency coming from $\wt C$ would imply that, for each $\tau \in S_4$, $n(e;\tau) + n(\delta;\tau) = 0 \mod 2$, which cannot be true. We conclude that $C$ must contain at least two 2,2-cycles, from which it follows that $C$ contains all three 2,2-cycles. That is, there is a unique maximal clique in $X_4$ containing the identity with a disconnected partner maximal clique.

Finally, the same argument shows that $\wt{C}\eta_1^{-1} = C$, so that any maximal clique disconnected from $C$ is simply a translation of $C$. Thus, we can verify the existence of precisely three mutually orthogonal Latin squares of order 4, all related by translations.

\subsection{Maximal cliques in $X_5$}

In the case $N=5$, an understanding of the structure of disconnected pairs of maximal cliques can be obtained directly by elementary means, as shown below.
Even so, it is interesting to view it from the point of view of the modular obstructions in $X_5$, which we examine a posteriori.

The analog of Corollary \ref{cor:X3} for $X_5$ is the following.

\begin{cor}\label{cor:X5}
If there exist disconnected maximal cliques $C$ and $\wt C$ in $X_5$, then there exist two distinct non-constant functions $f_1,f_2: C\cup\wt C\rightarrow \{0,\dots, 4\}$ each satisfying the property that for all $\tau\in S_5$,
$$\sum_{\sigma\in C\cup \wt C} f_i(\sigma)n(\sigma;\tau) = 0\mod 5.$$
\end{cor}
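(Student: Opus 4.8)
The plan is to deduce Corollary \ref{cor:X5} directly from Theorem \ref{modular obstruction}, by the same argument that yields Corollary \ref{cor:X3}. We apply that theorem over the field $\F_5$, whose characteristic $5$ divides $N = 5$, to the collection $\{C, \wt C\}$, so that $r = 2$.

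First I would record the dimension bound. With $\mathcal B = \{P_{nat}(1_\sigma) : \sigma \in C \cup \wt C\}$, Theorem \ref{modular obstruction} gives $\dim \vspan \mathcal B \leq (N^2 - 4N + r + 5)/2 = (25 - 20 + 2 + 5)/2 = 6$. Next I would observe that $C$ and $\wt C$ are disjoint: were some $\sigma$ in both, then for any other $\tau \in C$ the edge joining $\sigma$ and $\tau$ inside the clique $C$ would be an edge between a vertex of $C$ and a vertex of $\wt C$, contradicting disconnectedness (this uses that cliques in $X_5$ have more than one element). Hence the evaluation map $\F_5^{C \cup \wt C} \to \vspan \mathcal B$, $f \mapsto \sum_\sigma f(\sigma) P_{nat}(1_\sigma)$, has a $10$-dimensional domain and image of dimension at most $6$, so its kernel $K$ satisfies $\dim K \geq 4$.

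Then I would locate the clique dependencies inside $K$. Since $N = 0$ in $\F_5$, the identity $\sum_{\sigma \in C} P_{nat}(1_\sigma) : \tau \mapsto N$ says $1_C \in K$, and similarly $1_{\wt C} \in K$; these are linearly independent (nonzero functions with disjoint support), hence span a $2$-dimensional subspace $K_0 \subseteq K$. As $\dim(K/K_0) \geq 2$, I would choose $f_1, f_2 \in K$ whose classes in $K/K_0$ are linearly independent. Such $f_i$ take values in $\F_5 = \{0,\dots,4\}$, are distinct (their classes differ), and are non-constant, since a constant function equals $c\,(1_C + 1_{\wt C}) \in K_0$ and would therefore vanish in $K/K_0$. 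Finally, expanding $\sum_\sigma f_i(\sigma) P_{nat}(1_\sigma) = 0$ at a point $\tau \in S_5$ and using $P_{nat}(1_\sigma)(\tau) = \tr(\pi_{nat}(\sigma\tau^{-1})) = n(\sigma;\tau)$ gives exactly $\sum_{\sigma \in C \cup \wt C} f_i(\sigma)\, n(\sigma;\tau) = 0 \mod 5$, as claimed.

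I do not expect a genuine obstacle here; this is a routine specialization of Theorem \ref{modular obstruction}, and the computation $(N^2 - 4N + r + 5)/2 = 6$ already appears in Table \ref{tab:num deps}. The only points deserving a careful word are the disjointness of $C$ and $\wt C$ and the independence of the two clique dependencies $1_C, 1_{\wt C}$, which together ensure that the relation space $K$ of dimension $\geq 4$ really does supply two distinct \emph{non-constant} relations rather than only scalar multiples of clique dependencies. As in the $X_3$ case, I would then, in the discussion following the corollary, normalize so that $e \in C$ and evaluate these congruences at well-chosen $\tau$ (for instance near-derangements taken from $\wt C$) to extract the further constraints on $f_1$ and $f_2$ used in the subsequent analysis of $X_5$.
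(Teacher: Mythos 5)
Your proposal is correct and is essentially the paper's own argument: the corollary is stated as an immediate specialization of Theorem \ref{modular obstruction} with $N=5$, $r=2$ over $\F_5$, exactly as in Table \ref{tab:num deps} ($|\mathcal B|=10$, $\dim\le 6$, hence at least two non-clique dependencies beyond the span of $1_C$ and $1_{\wt C}$). Your added care about disjointness of the cliques and about quotienting by the clique dependencies to guarantee non-constancy and distinctness is a faithful filling-in of details the paper leaves implicit.
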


The first thing we can show is that a maximal clique with the identity for which there exists a disconnected maximal clique, cannot contain an odd derangement.
We start with a weaker statement.
\begin{lem}
Suppose $C$ and $\wt C$ are two disconnected maximal cliques in $X_5$ and that $C$ contains the identity.  
Then $C$ does not contain four odd derangements.
\end{lem}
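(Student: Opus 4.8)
The plan is to argue by contradiction: suppose $C$ and $\wt C$ are disconnected maximal cliques in $X_5$ with $e \in C$, and suppose $C$ contains four odd derangements. Since $|C| = 5$ and $C$ contains $e$, this means $C = \{e, \delta_1, \delta_2, \delta_3, \delta_4\}$ where each $\delta_i$ is an odd derangement. In $S_5$, the odd derangements are precisely the $4$-cycles (the even derangements being the $5$-cycles and the $(2,2)$-cycles), so the hypothesis forces all four nonidentity elements of $C$ to be $4$-cycles. The goal is to show that no such clique can have a disconnected partner, by exploiting the modular obstruction from Corollary \ref{cor:X5} together with the combinatorial constraints on how four $4$-cycles can pairwise differ by derangements.

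**Key steps.** First I would record the structural facts: for $C = \{e, \delta_1, \delta_2, \delta_3, \delta_4\}$ to be a clique, each $\delta_i$ must be a derangement and each $\delta_i \delta_j^{-1}$ ($i \neq j$) must be a derangement. Since each $\delta_i$ is a $4$-cycle, it fixes exactly one point, say $p_i \in \{1,\dots,5\}$; note the $p_i$ need not be distinct. Next, I would bring in the disconnected partner $\wt C = \{\omega_1,\dots,\omega_5\}$. By Lemma \ref{lem:nearderangements}, every $\sigma\omega_j^{-1}$ with $\sigma \in C$ is a near-derangement; in particular each $\omega_j$ is a near-derangement (taking $\sigma = e$ wouldn't directly give this, but $e\omega_j^{-1} = \omega_j^{-1}$ is a near-derangement, hence so is $\omega_j$). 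Then I would invoke Corollary \ref{cor:X5}: after translating so that $e \in C$, arguing as in the $X_3$ and $X_4$ cases, one gets $\sum_{\sigma \in C} f_i(\sigma) n(\sigma;\tau) \equiv \sum_{\sigma\in\wt C} f_i(\sigma) n(\sigma;\tau) \pmod 5$ separately vanish for suitable $\tau$ (e.g. $\tau = e$ and $\tau$ a near-derangement of $\wt C$), which when specialized gives $\sum_{i=1}^4 f(\delta_i) n(\delta_i;\tau) \equiv -f(e) n(e;\tau) \pmod 5$ for the relevant $\tau \in C$-translates. The crux is then to choose a small family of test permutations $\tau$ — transpositions, $3$-cycles, and near-derangements built from the fixed points $p_i$ and the cycle data of the $\delta_i$ — and show that the resulting linear system over $\mathbb{F}_5$ on $(f(\delta_1),\dots,f(\delta_4))$ forces $f$ to be constant on $C$, contradicting the non-constancy guaranteed by Corollary \ref{cor:X5}, OR directly forces an impossible configuration of the $\delta_i$'s.

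**Main obstacle.** The hard part will be controlling the interaction between the two cliques: the modular obstruction gives relations involving $n(\sigma;\tau)$ for all $\tau \in S_5$, but to extract a contradiction I need to evaluate at $\tau$'s that interact nontrivially with both $C$ and $\wt C$ simultaneously, and the fixed-point counts $n(\omega_j;\tau)$ depend on the (a priori unknown) partner clique. I expect the cleanest route is to first pin down, using only the four-$4$-cycle hypothesis and the clique condition, that the multiset $\{p_1,\dots,p_4\}$ of fixed points is highly constrained — likely that the $\delta_i$ cannot all be made to pairwise differ by derangements unless their fixed points coincide or follow a rigid pattern — and then show this rigid pattern is incompatible with Lemma \ref{lem:nearderangements} applied to $\delta_i\omega_j^{-1}$ (each must fix exactly one point, and counting these fixed points across $j$ gives $N = 5$ with no repeats). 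In other words, the real work is a finite but delicate case analysis of how four $4$-cycles in $S_5$ can form a clique together with $e$, combined with the parity/fixed-point bookkeeping forced by the disconnection; the modular obstruction of Corollary \ref{cor:X5} then serves either to shorten this analysis or to rule out the last surviving configuration. I would set up the $4$-cycle clique classification first, since everything downstream depends on it.
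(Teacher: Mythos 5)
Your proposal fails at the very first structural step: you classify the odd derangements of $S_5$ as the $4$-cycles, with the $5$-cycles and $(2,2)$-cycles as the even derangements. This is incorrect. In $S_5$ a $4$-cycle fixes one point and a $(2,2)$-cycle fixes one point, so neither is a derangement at all; the derangements of $S_5$ are exactly the $5$-cycles (which are even) and the $2{,}3$-cycles (which are odd). Consequently the configuration you set out to analyze --- a clique $\{e,\delta_1,\dots,\delta_4\}$ with each $\delta_i$ a $4$-cycle --- cannot even be a clique containing $e$, and every downstream step (the fixed points $p_i$ of the $\delta_i$, the ``four-$4$-cycle clique classification'') is built on this false foundation. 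The correct starting point is that the four odd derangements must be $2{,}3$-cycles, and the clique condition then forces them into an essentially unique pattern $\sigma_j=(a_5\,a_j)(\cdots)$ for an enumeration $a_1,\dots,a_5$.

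Beyond that, the remainder of your plan is a sketch rather than an argument: you defer the actual contradiction to an unspecified linear system over $\mathbb{F}_5$ coming from Corollary~\ref{cor:X5} or to a ``finite but delicate case analysis'' that you do not carry out. The paper's proof does not use the modular obstruction here at all; it is purely elementary. After pinning down the four $2{,}3$-cycles, one observes via Lemma~\ref{lem:nearderangements} that any $\omega\in\wt C$ must make all five products $\sigma_i\omega^{-1}$ near-derangements, and a direct check shows only three permutations of $S_5$ (the three $(2,2)$-cycles on $\{a_1,a_2,a_3,a_4\}$) have this property --- too few to populate a five-element clique $\wt C$. If you repair the cycle-type classification, that counting argument is the route you should aim for; the $\mathbb{F}_5$ machinery is unnecessary for this lemma.
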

\begin{proof}
Suppose $C = \{\sigma_i\}_{i=1}^5$ with $\sigma_5 = e$ and $\sigma_1,\dots,\sigma_4$ all odd derangements.
Then we may choose an enumeration $a_1,\dots,a_5$ of $\{1,\dots,5\}$ with
$$\sigma_1 = (a_5a_1)(a_2a_3a_4),\quad \sigma_2 = (a_5a_2)(a_3a_1a_4),$$
$$\sigma_3 = (a_5a_3)(a_2a_4a_1),\quad \sigma_4 = (a_5a_4)(a_2a_1a_3).$$
If $\omega$ is an element of $\wt C$, then $\sigma_i\omega^{-1}$ must be a near-derangement for all $i$.
The only elements of $S_5$ with this property are 
$$(a_1a_2)(a_3a_4),\quad (a_1a_3)(a_2a_4),\quad\text{and}\quad (a_1a_4)(a_2a_3).$$
Since $\wt C$ must contain five elements, this is a contradiction.
\end{proof}

Using this lemma, we can immediately prove that $C$ cannot contain any odd derangements at all.

\begin{lem}
Suppose $C$ and $\wt C$ are two disconnected maximal cliques in $X_5$ and that $C$ contains the identity.  
Then $C$ does not contain any odd derangements.
\end{lem}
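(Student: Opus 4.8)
The plan is to bootstrap from the previous lemma, which rules out four odd derangements in $C$, down to ruling out a single odd derangement. First I would assume for contradiction that $C = \{\sigma_i\}_{i=1}^5$ contains the identity $\sigma_5 = e$ and at least one odd derangement; the odd derangements in $S_5$ are exactly the $(2,2)$-cycles (products of two disjoint transpositions with one fixed point — wait, no: an odd derangement of $S_5$ has no fixed point and odd sign, so it is a single $4$-cycle times... actually a $4$-cycle fixes a point, so the odd derangements are the $(2,2,1)$-pattern permutations, i.e.\ products of two disjoint transpositions... those fix a point too). The genuine derangements of $S_5$ are the $5$-cycles (even) and the $(2,3)$-cycles (odd), so "odd derangement" means a permutation of cycle type $(2,3)$. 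So I would suppose $\sigma_1$ has cycle type $(2,3)$, say $\sigma_1 = (a_1 a_2)(a_3 a_4 a_5)$ after relabeling.

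The key step is to analyze how many $5$-cycles and $(2,3)$-derangements can simultaneously appear in $C$ alongside $e$, using the constraint that a disconnected partner clique $\wt C$ of size $5$ must exist. For each $\omega \in \wt C$, every $\sigma_i \omega^{-1}$ is a near-derangement. Fixing attention on the subclique structure: I would count, for a candidate $\omega$, the permutations $\mu$ such that $\mu$, $\sigma_1 \mu^{-1}$ (equivalently $\mu \sigma_1^{-1}$ after adjusting sides) is a near-derangement and simultaneously $\mu$ itself is a near-derangement (taking $\omega^{-1}$ and noting $e \cdot \omega^{-1} = \omega^{-1}$ must be a near-derangement). The heart of the argument: if $C$ contains $e$, one $(2,3)$-derangement, and three further derangements, I want to show the set of permutations that are near-derangements "opposite" all five of these has fewer than five elements, contradicting $|\wt C| = 5$. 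A clean route is to combine this with the preceding lemma: show that forbidding four odd derangements while allowing one forces the remaining three derangements $\sigma_2,\sigma_3,\sigma_4$ to be $5$-cycles, then directly enumerate — much as in the proof of the preceding lemma — the near-derangements compatible with $\{e, \sigma_1, \sigma_2\}$ where $\sigma_1$ is $(2,3)$-type and $\sigma_2$ is a compatible $5$-cycle, and check the count drops below $5$.

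The main obstacle I anticipate is the case analysis on how a single $(2,3)$-derangement interacts with the $5$-cycles in $C$: unlike the all-odd case of the previous lemma, where the four $\sigma_i$ had a rigid common structure pinned down by the clique condition, here the mix of cycle types gives more freedom, and I would need to argue that whatever the $5$-cycles $\sigma_2, \sigma_3, \sigma_4$ are (subject to all pairwise quotients being derangements and all quotients with $\sigma_1$ being derangements), the common near-derangement partners number at most four. I expect this to come down to observing that a $(2,3)$-derangement $\sigma_1$ restricts the fixed point of any near-derangement partner $\mu = \sigma_i \omega^{-1}$ to lie among only a few values — the fixed point of $\sigma_1 \mu^{-1}$ and of $\mu$ are heavily constrained by the $2$-cycle part of $\sigma_1$ — and pigeonholing these against the five required partners. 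If a fully uniform argument proves elusive, the fallback is a short finite check over the conjugacy-class structure, which is legitimate here since we may conjugate $C$ to assume $\sigma_1$ is a fixed representative of the $(2,3)$ class.
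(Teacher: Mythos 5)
There is a genuine gap, in two places. First, the inference ``forbidding four odd derangements while allowing one forces the remaining three derangements $\sigma_2,\sigma_3,\sigma_4$ to be $5$-cycles'' is false: the preceding lemma only excludes the case of \emph{four} odd derangements in $C$, so a clique containing $e$ and one odd derangement could a priori contain one or two further odd ones. Your plan therefore only treats the subcase in which exactly one of the four derangements is odd, and the cases of two or three odd derangements are never addressed. (The argument could be salvaged if you instead proved the stronger claim that $\{e,\sigma_1,\sigma_2\}$, with $\sigma_1$ of type $(2,3)$ and $\sigma_2$ a compatible $5$-cycle, already admits fewer than five common near-derangement partners --- then ``at least one odd and at least one even'' would be excluded and the all-odd case is the preceding lemma --- but you neither state this reduction nor establish the count.) Second, the central counting step is not actually carried out: you describe what you would enumerate and offer a ``finite check'' as a fallback, so even in the case you do consider the proposal is a plan rather than a proof.

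For comparison, the paper avoids all enumeration via a translation trick you do not use: if $\sigma\in C$ is an odd derangement, then $C\sigma^{-1}$ and $\wt C\sigma^{-1}$ are again disconnected maximal cliques containing the identity, and right-multiplication by $\sigma^{-1}$ turns every \emph{even} derangement of $C$ into an odd one while turning $e$ into the odd derangement $\sigma^{-1}$; hence a clique with $k\in\{1,2\}$ odd derangements becomes one with $5-k\geq 3$ of them. A short structural observation (a maximal clique containing $e$ and three odd derangements must contain a fourth) then lands in exactly the situation excluded by the preceding lemma. If you wish to complete your own route, you should either adopt this translation step or explicitly handle the two- and three-odd-derangement cases and supply the deferred enumeration.
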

\begin{proof}
Suppose that $C$ contains an odd derangement $\sigma$.  If $C$ contains fewer than $3$ odd derangements, then we can replace $C$ with $C\sigma^{-1}$ and $\wt C$ with $\wt C\sigma^{-1}$, obtaining two new disconnected maximal cliques with $C \sigma^{-1}$ having at least three odd derangements.
Thus without loss of generality, we may assume $C = \{\sigma_i\}_{i=1}^5$ with $\sigma_5 = e$ and $\sigma_1,\sigma_2,\sigma_3$ odd derangements.
However, any maximal clique in $X_5$ containing the identity and at least three odd derangements, must contain four odd derangements. By the previous Lemma, this is a contradiction.
\end{proof}

As a consequence, we see that if $C$ and $\wt C$ are maximal cliques in $X_5$ and $C$ contains the identity, then $C$ must be made up of only the identity and four $5$-cycles.
Likewise, $\wt C$ must be a translation of a clique made up of only the identity and four $5$-cycles.
We show that these five cycles must be all powers of one-another.
\begin{lem}
Suppose that $C$ is a maximal clique in $X_5$ which contains the identity.
Then
$$C = \{e,\sigma,\sigma^2,\sigma^3,\sigma^4\},$$
for some five-cycle $\sigma$.
\end{lem}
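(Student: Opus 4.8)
The plan is to show that the four five-cycles in $C$, together with the identity, are forced to be the powers of a single five-cycle by a direct combinatorial analysis of the condition that consecutive differences be derangements. Write $C = \{e, \sigma_1, \sigma_2, \sigma_3, \sigma_4\}$, where, by the preceding results, each $\sigma_i$ is a five-cycle. The defining property of a clique is that $\sigma_i \sigma_j^{-1}$ is a derangement for all $i \neq j$; equivalently, $\sigma_i(k) \neq \sigma_j(k)$ for every symbol $k$, and also $\sigma_i(k) \neq k$ for every $i, k$. So the columns of the $5 \times 5$ array whose $i$-th row is $(\sigma_i(1), \dots, \sigma_i(5))$ (padding with the identity row) form a Latin square in which no symbol appears on the diagonal — exactly the structure of a maximal clique from the earlier correspondence.

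First I would normalize: conjugating $C$ by a fixed permutation replaces each $\sigma_i$ by a conjugate five-cycle and preserves both the clique property and the conclusion (the powers of $\sigma$ conjugate to the powers of a conjugate of $\sigma$). So I may assume $\sigma_1 = (1\,2\,3\,4\,5)$. Next I would track the symbol $1$: the values $\sigma_1(1), \sigma_2(1), \sigma_3(1), \sigma_4(1)$ are four distinct elements of $\{2,3,4,5\}$ (none equal to $1$, by the derangement-with-$e$ condition and pairwise distinctness), hence they are exactly $\{2,3,4,5\}$ in some order. After relabeling the indices $2,3,4$ I may assume $\sigma_i(1) = i+1$ for $i = 1,2,3,4$, i.e. $\sigma_2(1) = 3$, $\sigma_3(1) = 4$, $\sigma_4(1) = 5$. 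Now I claim each $\sigma_i$ is forced: I would argue that $\sigma_2$ must be $(1\,3\,5\,2\,4) = \sigma_1^2$, then $\sigma_3 = \sigma_1^3$, then $\sigma_4 = \sigma_1^4$. The mechanism is that once $\sigma_1$ is fixed and $\sigma_i(1)$ is pinned down, the requirement that $\sigma_i \sigma_1^{-1}$ (and $\sigma_i \sigma_j^{-1}$ for the already-determined $j$) be a derangement, combined with $\sigma_i$ being a five-cycle, leaves at most one possibility at each step — essentially because on five symbols the only five-cycles $\tau$ with $\tau \sigma_1^{-1}$ a derangement and a prescribed value $\tau(1)$ are the appropriate powers of $\sigma_1$. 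I expect to verify this by a short case check: listing the five-cycles through a given value of $\sigma_i(1)$ and eliminating those whose quotient with $\sigma_1$ (or with an earlier $\sigma_j$) has a fixed point.

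The main obstacle is making the "at most one possibility at each step" claim airtight without an unpleasant brute-force enumeration: a priori there are $4! = 24$ five-cycles in $S_5$, and pinning $\tau(1)$ still leaves six of them, so I need a genuinely structural reason that the derangement conditions cut this down to one. The cleanest route is probably to observe that if $\tau$ and $\rho$ are five-cycles with $\tau\rho^{-1}$ a derangement, then $\tau\rho^{-1}$ is itself a five-cycle or a $(2,2)$-cycle, and then to use the fact (already exploited in the $X_4$ analysis and the preceding lemmas) that $(2,2)$-cycles are the obstruction — here one shows a $(2,2)$-cycle quotient cannot occur among five mutually "deranged" five-cycles, so all pairwise quotients $\sigma_i\sigma_j^{-1}$ are five-cycles, and the set $\{e,\sigma_1,\sigma_2,\sigma_3,\sigma_4\}$ with all pairwise quotients five-cycles and closed-up structure is recognizably the cyclic group $\langle \sigma_1\rangle$. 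Alternatively, and perhaps more in the spirit of the paper, I would phrase the final identification as: the array is a Latin square on $\{1,\dots,5\}$ with empty diagonal and first row the cycle $\sigma_1$, and such an array is unique up to the relabeling already performed, namely the cyclic one $\sigma_i = \sigma_1^i$; this uniqueness is a standard small-order fact about Latin squares of order $5$ with a transversal-free diagonal. Either way, the routine verification is short, and I would present it compactly rather than grinding through all quotients.
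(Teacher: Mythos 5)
Your proof has a genuine gap at its central step. The claim that, once $\sigma_1=(1\,2\,3\,4\,5)$ is fixed and $\sigma_2(1)=3$ is prescribed, the derangement conditions against $e$ and $\sigma_1$ leave only $\sigma_1^2=(1\,3\,5\,2\,4)$ is false: the five-cycle $(1\,3\,5\,4\,2)$, with value sequence $3,1,5,2,4$, also satisfies $\tau(k)\neq k$ and $\tau(k)\neq\sigma_1(k)$ for every $k$, and its quotient with $\sigma_1$ is the five-cycle $(1\,4\,5\,2\,3)$, hence a legitimate derangement. So ``at most one possibility at each step'' already fails at the first step, and since $\sigma_2$ is the first element you determine, there is no ``already-determined $j$'' available to rescue the argument. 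The partial set $\{e,(1\,2\,3\,4\,5),(1\,3\,5\,4\,2)\}$ does fail to extend to a maximal clique consisting entirely of five-cycles, but verifying that requires examining the two remaining rows simultaneously --- a global argument, which is exactly what your local, greedy scheme is designed to avoid.

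Neither of your proposed repairs closes the gap. The $(2,2)$-cycle dichotomy is vacuous in $S_5$: a $(2,2)$-cycle fixes a point and is never a derangement there, so the quotient of two five-cycles that is a derangement is automatically an even derangement and hence a five-cycle; this gives no constraint beyond what you already imposed and does not exclude $(1\,3\,5\,4\,2)$. The assertion that a five-element set containing the identity with all pairwise quotients five-cycles ``is recognizably the cyclic group'' is precisely the lemma to be proved, not a reduction of it; likewise, the ``standard small-order fact'' about order-$5$ Latin squares invoked in your alternative phrasing is not a statement about transversals and is again equivalent to the claim in question. The paper's proof instead uses a genuinely global identity: because $C\tau^{-1}$ is a maximal clique for every $\tau$, one has $\sum_{i=1}^{5} n(\sigma_i;\tau)=5$; taking $\tau=\sigma_1^{-1}$ forces some $\sigma_j$ to agree with $\sigma_1^{-1}$ in at least two places, from which $\sigma_1^{-1}\in C$, and the last two elements are then pinned down as $\sigma_1^2$ and $\sigma_1^3$. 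Some use of all five rows at once along these lines appears unavoidable, and your write-up would need to incorporate it.
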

\begin{proof}
By the previous lemma, we know that $C = \{\sigma_i\}_{i=1}^5$ for some permutations with $\sigma_5 =e$ and $\sigma_1,\dots,\sigma_4$ all $5$-cycles.
As $C\tau^{-1}$ is always a maximal clique,
$$\sum_{i=1}^5 n(\sigma_i;\tau) = 5$$ 
for all $\tau\in S_5$.
Taking $\tau=\sigma_1^{-1}$, this implies that there exists $j$ with
$$n(\sigma_j;\sigma_1^{-1}) \geq 2.$$
This means we have
$$\sigma_1 = (a_1a_2a_3a_4a_5),\ \ \text{and}\ \ \sigma_j = (a_3a_2a_1xy),$$
for some enumeration $a_1,\dots,a_5$ of $\{1,\dots,5\}$ and $x,y\in\{x_4,x_5\}$ distinct.
Since $\sigma_1\sigma_j^{-1}$ is a derangement, we must have $x=a_5$ and $y=a_4$. Therefore $\sigma_j = \sigma_1^{-1}$.
The only other $5$ cycles that are derangements of $\sigma_1$ and $\sigma_1^{-1}$ are
$$\sigma_1^2 = (a_1a_3a_5a_2a_4),\ \ \text{and}\ \ \sigma_1^3 = (a_1a_4a_2a_5a_3).$$
This completes the proof.
\end{proof}

To summarize, $C$ must consist of powers of a $5$-cycle $C = \{\sigma^k\}_{k=0}^4$.
Likewise, $\wt C$ is a translation of the powers of a $5$-cycle.
In particular, we can take $C = \{\sigma_i\}_{i=1}^5$ and $\wt C = \{\omega_i\}_{i=1}^5$ be two disconnected maximal cliques with $\sigma_i = \sigma^i$, and $\omega_i = \omega^i\alpha$ for some $5$-cycles $\sigma$ and $\omega$ and some permutation $\alpha$. 

The essence of Corollary \ref{cor:X5} is that the four-dimensional $\mathbb F_5$-subspace of $L^2(C\cup\wt C)$,
$$V = \vspan\{f_1,f_2,1_C,1_{\wt C}\},$$
has the property that $n(\cdot;\tau)\in V^\perp$ for all $\tau\in S_5$.
This implies that the space 
$$U = \vspan\{n(\cdot;\tau): \tau\in S_5\}$$
is at most six-dimensional.
The structure of the cliques above implies that the space
$$U|_C := \{f|_C: f\in U\}$$
defined by restricting functions on $C\cup\wt C$ to $C$ is three dimensional.
In particular, if $\sigma = (a_1a_2a_3a_4a_5)$, then $U_C$ is spanned by the restrictions of $n(\cdot;\omega_1) = 1_C$, $n(\cdot;(a_1a_2))$, and $n(\cdot; (a_1a_2a_3))$.
Similarly, $\dim U|_{\wt C} = 3$, and it follows that $\dim U\leq 6$.
In this way, the structure of the cliques that we discovered makes the modular obstruction obvious for $N=5$. 

\section{Euler's 36 Officer Problem}
\label{section:Euler36}
\subsection{Maximal cliques in $X_6$}
Throughout this section, we will assume that two disconnected maximal cliques $C$ and $\wt C$ exist, resulting ultimately in a contradiction.
We start by exploring the modular obstruction(s) in this situation.

\begin{cor}
If there exist disconnected maximal cliques $C$ and $\wt C$ in $X_6$, then there exists a proper subset $R\subseteq C\cup\wt C$, satisfying the property that for all $\tau\in S_6$,
$$\sum_{\sigma\in R} n(\sigma;\tau) = 0\mod 2.$$
Also there exists a non-constant function $f: C\cup\wt C\rightarrow \{0, 1, 2\}$ satisfying the property that for all $\tau\in S_6$,
$$\sum_{\sigma\in C\cup \wt C} f(\sigma)n(\sigma;\tau) = 0\mod 3.$$
\end{cor}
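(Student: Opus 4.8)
The plan is to invoke Theorem~\ref{modular obstruction} twice, both times with $N=6$ and $r=2$: once over $\F_2$ and once over $\F_3$, these being exactly the prime fields whose characteristic divides $N=6$. In each case the bound reads $(N^2-4N+r+5)/2 = 19/2$, so $\dim\vspan\mathcal B \leq 9$, while $\mathcal B$ is indexed by the $12$ permutations comprising $C\cup\wt C$ --- here $C$ and $\wt C$ are disjoint, since a common vertex would, belonging to the clique $\wt C$, be joined by an edge to every other vertex of $\wt C$, contradicting that $C$ and $\wt C$ are disconnected. Consequently, for each of the two fields, the evaluation map $\F^{C\cup\wt C}\to L^2(S_6)$ given by $(c_\sigma)\mapsto\sum_\sigma c_\sigma P_{nat}(1_\sigma)$ has kernel of dimension at least $12-9=3$.

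Next I would locate the clique dependencies inside this kernel. Since $N=6$ vanishes in both $\F_2$ and $\F_3$, the constant function $\tau\mapsto N$ is zero, so $\sum_{\sigma\in C}P_{nat}(1_\sigma)=0$ and likewise over $\wt C$; thus $1_C$ and $1_{\wt C}$ lie in the kernel, and they are linearly independent because $C\cap\wt C=\varnothing$. The kernel therefore strictly contains the $2$-dimensional subspace $\vspan\{1_C,1_{\wt C}\}$, so I may pick a kernel element $g$ lying outside it. Since every function that is constant on each of $C$ and $\wt C$ already lies in $\vspan\{1_C,1_{\wt C}\}$, this $g$ is non-constant on $C\cup\wt C$.

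Finally I would translate $g$ back into the stated congruences using the formula $P_{nat}(1_B)\colon\tau\mapsto\sum_{\sigma\in B}n(\sigma;\tau)$ established earlier. Taking $\F=\F_2$, the kernel element $g$ is the indicator function $1_R$ of a subset $R\subseteq C\cup\wt C$, which is proper because $1_{C\cup\wt C}=1_C+1_{\wt C}\in\vspan\{1_C,1_{\wt C}\}$ (and nonempty because $1_\varnothing=0\in\vspan\{1_C,1_{\wt C}\}$); evaluating $\sum_\sigma g(\sigma)P_{nat}(1_\sigma)=0$ at each $\tau\in S_6$ then gives $\sum_{\sigma\in R}n(\sigma;\tau)=0\mod 2$. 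Taking $\F=\F_3$, the element $g$ is a $\{0,1,2\}$-valued, non-constant function $f$ on $C\cup\wt C$, and the same evaluation yields $\sum_{\sigma\in C\cup\wt C}f(\sigma)n(\sigma;\tau)=0\mod 3$. I do not expect a genuine obstacle here: the substance is Theorem~\ref{modular obstruction}, which is already proved, and the one thing that really needs care is the bookkeeping --- the disjointness of $C$ and $\wt C$, the precise (two-) dimension of the clique-dependency subspace and its containment in each kernel, and verifying that an element chosen outside it is truly non-constant.
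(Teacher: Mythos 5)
Your proposal is correct and is essentially the paper's own argument: the corollary is exactly the $N=6$, $r=2$ instance of Theorem~\ref{modular obstruction} applied over $\F_2$ and $\F_3$, with the guaranteed non-clique dependency (cf.\ the $N=6$ row of Table~\ref{tab:num deps}) read off as an indicator function of a proper subset in characteristic $2$ and as a non-constant $\{0,1,2\}$-valued function in characteristic $3$. Your bookkeeping --- disjointness of $C$ and $\wt C$, the $2$-dimensional clique-dependency subspace inside a kernel of dimension at least $3$, and the translation via $P_{nat}(1_\sigma)(\tau)=n(\sigma;\tau)$ --- matches what the paper leaves implicit.
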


Isolating the first condition, note that the set $R$ must satisfy the property that $R\cap C$ and $R\cap\wt{C}$ are both nonempty and even, so the number of elements in $R$ is $4,6,8,10$ or $12$.
Also if $R$ satisfies the corollary, so do the sets
$$(C\cap R')\cup (\wt C\cap R),\quad (C\cap R)\cup (\wt C\cap R'),\quad\text{and}\ (C\cup \wt C)\cap R'.$$
Thus without loss of generality, $R$ can be taken to have exactly two elements in $C$ and two elements in $\wt C$.
By translating, we can also assume $e\in R\cap C$.
To summarize, for $N=6$ there must exist a set $R$ with the following list of properties

\begin{itemize}
\item $R = \{e,\delta,\eta_1,\eta_2\}$ for some derangement $\delta$ and near-derangements $\eta_1$ and $\eta_2$, 
\item $\eta_1\delta^{-1}$ and $\eta_2\delta^{-1}$ are near-derangements and $\eta_1\eta_2^{-1}$ is a derangement, and
\item $\sum_{\sigma\in R} n(\sigma; \tau)$ is even for all $\tau\in S_6$.
\end{itemize}

\subsection{Nonexistence proof}
To prove that no $6\times 6$ pair of orthogonal Latin squares exists, we will prove that no subset $R\subseteq S_6$ with the properties outlined in the previous section can exist.

For each $j$, let $a_j$ and $c_j$ be the fixed points of $\eta_j$ and $\eta_j\delta^{-1}$, respectively.
This in particular implies $\delta(b_j) = c_j$ and $\eta_j(b_j) = c_j$ for some $b_j$. 

\begin{lem} \label{lem:allsix}
Assume $R$ exists. Then the sets $D$, $E$, $F$, $G$, and $H$ defined by
\[D = \{a_1, a_2, b_1, b_2, c_1, c_2\},\]
\[E = \{\delta(a_1),\delta^{-1}(a_1),\eta_2(a_1),\eta_2^{-1}(a_1),a_1,a_2\},\]
\[F = \{\delta(a_2),\delta^{-1}(a_2),\eta_1(a_2),\eta_1^{-1}(a_2),a_1,a_2\},\]
\[G = \{c_1, c_2, b_2,\delta(c_2),\eta_1(b_2), \delta(\eta_1^{-1}(c_2))\},\]
\[H = \{c_1, c_2, b_1, \delta(c_1), \eta_2(b_1), \delta(\eta_2^{-1}(c_1))\},\]

are all sets with six elements.
\end{lem}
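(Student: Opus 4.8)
The plan is to exploit the third defining property of $R$ through a single matrix over $\F_2$. Set $M:=\pi_{nat}(e)+\pi_{nat}(\delta)+\pi_{nat}(\eta_1)+\pi_{nat}(\eta_2)\in M_6(\F_2)$. Since $n(\sigma;\tau)=\tr(\pi_{nat}(\sigma)\pi_{nat}(\tau)^T)=\langle\pi_{nat}(\sigma),\pi_{nat}(\tau)\rangle$ for the form $\langle A,B\rangle=\tr(AB^T)$, the requirement that $\sum_{\sigma\in R}n(\sigma;\tau)$ be even for every $\tau\in S_6$ says precisely that $M$ is orthogonal to every permutation matrix, i.e. $M\in V^\perp$ with $V$ the span of the permutation matrices---exactly the space appearing in the proof that $\dim P_{nat}(L^2(S_N))$ drops in the modular setting. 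Over any field, $V^\perp$ consists of the matrices $M_{ij}=r_i+c_j$ with $\sum_i r_i=\sum_j c_j$; over $\F_2$ this means every row of $M$ equals some fixed vector or its complement, and likewise every column.

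Before exploiting this I would record the elementary non-coincidences holding in any case: since $\delta$ and $\eta_1\eta_2^{-1}$ are derangements and $\eta_1,\eta_2,\eta_1\delta^{-1},\eta_2\delta^{-1}$ are near-derangements, one gets $a_1\neq a_2$, $b_1\neq b_2$, $c_1\neq c_2$, $a_j\neq b_j$, $a_j\neq c_j$, $b_j\neq c_j$, and one locates every coincidence among $e,\delta,\eta_1,\eta_2$ as bijections: $e$ and $\eta_j$ agree only at $a_j$; $\delta$ and $\eta_j$ agree only at $b_j$, both sending it to $c_j$; while $e,\delta$ never agree and $\eta_1,\eta_2$ never agree. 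Hence the four agreement positions of $M$ are pairwise distinct and none is shared by three of the four permutations, so $M$---a sum of four permutation matrices, $24$ ones over $\mathbb{Z}$---has exactly $24-8=16$ nonzero entries over $\F_2$. Matching this count against the form of elements of $V^\perp$, a short computation forces the set $P$ of weight-$4$ rows and the set $Q$ of weight-$4$ columns of $M$ to satisfy $|P|=|Q|=2$, with $M_{ij}=1$ exactly when precisely one of $i\in P$, $j\in Q$ holds.

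Now I would read $M$ off entry by entry. Column $j$ is the mod-$2$ indicator of the multiset $\{j,\delta(j),\eta_1(j),\eta_2(j)\}$, hence has weight $4$ exactly when $j\notin\{a_1,a_2,b_1,b_2\}$; so $Q=\{1,\dots,6\}\setminus\{a_1,a_2,b_1,b_2\}$ and $a_1,a_2,b_1,b_2$ are distinct, and symmetrically (using $\{i,\delta^{-1}(i),\eta_1^{-1}(i),\eta_2^{-1}(i)\}$ for rows) $P=\{1,\dots,6\}\setminus\{a_1,a_2,c_1,c_2\}$ with $a_1,a_2,c_1,c_2$ distinct. Since $b_j\notin Q$, the column indexed by $b_j$ has weight $2$ and support $P$; but it is the mod-$2$ indicator of $\{b_j,c_j,c_j,\eta_{3-j}(b_j)\}$, i.e. of $\{b_j,\eta_{3-j}(b_j)\}$, so $b_j\in P$, whence $P=\{b_1,b_2\}$ and also $\eta_2(b_1)=b_2$, $\eta_1(b_2)=b_1$. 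Comparing $P=\{b_1,b_2\}$ with $P=\{1,\dots,6\}\setminus\{a_1,a_2,c_1,c_2\}$ gives $D=\{a_1,a_2,b_1,b_2,c_1,c_2\}=\{1,\dots,6\}$, so $|D|=6$, and also $Q=\{c_1,c_2\}$.

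For the remaining four sets I would continue the support-chase. The column and row indexed by $a_1$ have weight $2$ with supports $P=\{b_1,b_2\}$ and $Q=\{c_1,c_2\}$ and are the mod-$2$ indicators of $\{\delta(a_1),\eta_2(a_1)\}$ and $\{\delta^{-1}(a_1),\eta_2^{-1}(a_1)\}$, so $E=\{a_1,a_2\}\cup\{b_1,b_2\}\cup\{c_1,c_2\}=\{1,\dots,6\}$; the same computation with $\eta_1$ and $a_2$ gives $F=\{1,\dots,6\}$. Because $c_j$ is the fixed point of $\eta_j\delta^{-1}$, the row indexed by $c_1$ is the mod-$2$ indicator of $\{c_1,\eta_2^{-1}(c_1)\}$, and its support $\{c_1,c_2\}$ forces $\eta_2(c_2)=c_1$, and symmetrically $\eta_1(c_1)=c_2$; feeding these into the column indexed by $c_1$ (weight $4$, support $\{1,\dots,6\}\setminus P=\{a_1,a_2,c_1,c_2\}$) gives $\{\delta(c_1),\eta_2(c_1)\}=\{a_1,a_2\}$, and with the mirror statement and injectivity of $\delta$, $\{\delta(c_1),\delta(c_2)\}=\{a_1,a_2\}$. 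Substituting $\eta_1(b_2)=b_1$, $\eta_1^{-1}(c_2)=c_1$ into $G$ and $\eta_2(b_1)=b_2$, $\eta_2^{-1}(c_1)=c_2$ into $H$ collapses both to $\{b_1,b_2,c_1,c_2\}\cup\{\delta(c_1),\delta(c_2)\}=\{1,\dots,6\}$. The only genuine work is in the second paragraph---checking the four agreement positions are pairwise distinct and that the count $16$ together with $M\in V^\perp$ pins down the cross-shape with $|P|=|Q|=2$; everything afterward is a mechanical chase of supports.
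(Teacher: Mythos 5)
Your proof is correct, but it takes a genuinely different route from the paper's. The paper argues locally: assuming, say, $E$ has fewer than six elements, it picks $q\notin E$ and tests the parity condition against the single transposition $\tau=(a_1\ q)$, computing $\sum_{\sigma\in R}n(\sigma;\tau)=5$; it then handles $F$, $G$, $H$ by replacing $R$ with the translates $R\delta^{-1}$, $R\eta_1^{-1}$, $R\eta_2^{-1}$, and gets $D$ last. You instead globalize: the parity condition for all $\tau$ says $M=\sum_{\sigma\in R}\pi_{nat}(\sigma)$ lies in $V^\perp$, whose explicit description ($M_{ij}=r_i+c_j$) is already computed in the paper's dimension-drop theorem, and the count of $16$ nonzero entries pins down the cross-shape with $|P|=|Q|=2$; everything then falls out of a support chase. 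What your approach buys is considerably more than the lemma: you obtain $P=\{b_1,b_2\}$, $Q=\{c_1,c_2\}$, $\{\delta(a_1),\eta_2(a_1)\}=\{b_1,b_2\}$, $\eta_2(b_1)=b_2$, $\eta_1(b_2)=b_1$, $\eta_1(c_1)=c_2$, $\eta_2(c_2)=c_1$, and $\{\delta(c_1),\delta(c_2)\}=\{a_1,a_2\}$, which subsume and sharpen the paper's subsequent Lemmas \ref{lem:b1b2c1c2} and \ref{lem:sigmaaction}; indeed each of your five sets is shown to equal all of $\{1,\dots,6\}$. The cost is that you lean on the identification of $V^\perp$ with the $(2N-2)$-dimensional space of matrices $r_i+c_j$, which requires that the permutation matrices span the full $((N-1)^2+1)$-dimensional space of equal-line-sum matrices over $\F_2$ — a fact the paper asserts rather than proves; if you want the argument self-contained, note that for $\sigma$ with $\sigma(j)=i$, $\sigma(j')=i'$ and $\sigma'=(i\ i')\sigma$, orthogonality to both $\pi_{nat}(\sigma)$ and $\pi_{nat}(\sigma')$ gives $M_{ij}+M_{i'j'}+M_{i'j}+M_{ij'}=0$, which directly forces the form $M_{ij}=r_i+c_j$. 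I also checked the two steps where your argument could silently fail — that no entry of the integer matrix $M$ exceeds $2$ (no three of $e,\delta,\eta_1,\eta_2$ can agree at a position, since every triple contains either $\{e,\delta\}$ or $\{\eta_1,\eta_2\}$), and that no column has weight $0$ (which is what forces $a_1,a_2,b_1,b_2$ distinct) — and both hold.
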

\begin{proof}
Assume $E$ has fewer than six elements.
Then we can choose $1\leq q\leq 6$ with $q\notin E.$
Consequently, the transposition $\tau = (a_1 \enspace q)$ makes $\delta\tau$ and $\eta_1\tau$ both derangements.
Moreover $\eta_2\tau$ still has a single fixed point, so $\sum_{\sigma\in R} n(\sigma; \tau) = 5$,
which is a contradiction.  

Next, consider the set
\[R\delta^{-1}   = \{e,\wt\delta := \delta^{-1},\wt\eta_1 = \eta_1\delta^{-1},\wt\eta_2 = \eta_2\delta^{-1}\}.\]
The associated fixed-point data is
$$\wt a_1 = c_1,\ \wt a_2 = c_2,\ \wt b_1 = \delta(a_1),\ \wt c_1 = a_1,\ \wt b_2 = \delta(a_2),\ \wt c_2 = a_2.$$
The same arguments as above implies that $\{\wt a_1,\wt a_2,\wt\delta(\wt a_2),\wt\delta^{-1}(\wt a_2),\wt \eta_1(\wt a_2),\wt\eta_1^{-1}(\wt a_2)\}$ is a set of $6$ distinct elements, i.e.,
all six elements of $G$ are distinct. A similar argument proves that $F$ and $H$ each have six elements.

Finally, using the distinctness of the elements of $E$, $F$, $G$, and $H$, together with the facts that $\delta$ is a derangement shows that $D$ has six elements as well.
\end{proof}












We leverage this lemma now to restrict the possible cycle types of $\delta$.
For example, we have the following corollary.
\begin{cor}
Assume $R$ exists.  Then 
the derangement $\delta$ cannot be a $2,2,2$-cycle.
\end{cor}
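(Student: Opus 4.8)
The plan is to assume $\delta$ is a $2,2,2$-cycle and derive a contradiction from Lemma~\ref{lem:allsix}, specifically from the fact that the set $D = \{a_1,a_2,b_1,b_2,c_1,c_2\}$ has six distinct elements. The key structural facts to exploit are that $\delta$ has order $2$ and that it pairs up $\{1,\dots,6\}$ into three transpositions. First I would record what a $2,2,2$-cycle forces: writing $\delta = (p_1q_1)(p_2q_2)(p_3q_3)$, every element is moved, and $\delta = \delta^{-1}$. The recipe $\delta(b_j) = c_j$ then says $c_j = \delta(b_j)$, i.e., $b_j$ and $c_j$ are $\delta$-partners, so $\{b_j,c_j\}$ is one of the three transpositions of $\delta$ for each $j=1,2$; since $b_1\neq b_2$ (their $\delta$-images $c_1,c_2$ are distinct, as $\eta_1\eta_2^{-1}$ being a derangement forces the fixed points of $\eta_1,\eta_2$ — hence the relevant data — to differ), $\{b_1,c_1\}$ and $\{b_2,c_2\}$ are two distinct transpositions of $\delta$.

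Next I would bring in $a_1,a_2$, the fixed points of $\eta_1,\eta_2$. The third transposition of $\delta$ is $\{1,\dots,6\}\setminus(\{b_1,c_1\}\cup\{b_2,c_2\})$, a two-element set; call it $\{r,s\}$. Since $D$ has six elements and $\{b_1,c_1,b_2,c_2\}$ already exhausts four of them, we must have $\{a_1,a_2\} = \{r,s\}$, i.e., $a_1$ and $a_2$ are $\delta$-partners: $\delta(a_1) = a_2$. Now I would feed this back into the other sets from Lemma~\ref{lem:allsix}. Look at $E = \{\delta(a_1),\delta^{-1}(a_1),\eta_2(a_1),\eta_2^{-1}(a_1),a_1,a_2\}$: since $\delta(a_1) = \delta^{-1}(a_1) = a_2$, the first two listed elements both equal $a_2$, so $E$ can contain at most $|\{a_1,a_2,\eta_2(a_1),\eta_2^{-1}(a_1)\}|\leq 4$ elements, contradicting $|E| = 6$.

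Actually the contradiction is already immediate from $E$ once $\delta^2 = e$, without even needing $\delta(a_1) = a_2$: $\delta(a_1) = \delta^{-1}(a_1)$ always holds when $\delta$ has order $2$, collapsing $E$ to at most five elements. So the cleanest route is: assume $\delta$ is a $2,2,2$-cycle, note $\delta = \delta^{-1}$, observe $\delta(a_1)$ and $\delta^{-1}(a_1)$ coincide in the list defining $E$, conclude $|E|\le 5$, and invoke Lemma~\ref{lem:allsix} for the contradiction. I expect no real obstacle here — the only thing to be careful about is confirming that the elements listed in $E$ are genuinely being counted with the multiplicity I claim (i.e., that $\delta(a_1)$ and $\delta^{-1}(a_1)$ are literally the same symbol, which is just $\delta^2 = \mathrm{id}$), and perhaps remarking why this argument does not also kill the $4,2$- or $3,3$- or $6$-cycle cases (it doesn't, because there $\delta(a_1)\neq\delta^{-1}(a_1)$ in general). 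The $E$-collapse argument is short; if a referee wanted the $D$-based argument instead, the two-paragraph version above supplies it.
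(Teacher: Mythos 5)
Your final ``cleanest route'' is exactly the paper's proof: the paper argues that $|E|=6$ forces $\delta(a_1)\neq\delta^{-1}(a_1)$, hence $\delta$ does not have order $2$, which is precisely your $E$-collapse argument in contrapositive form. The proposal is correct; the preliminary $D$-based argument is unnecessary but also sound.
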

\begin{proof}
Since $E$ has six elements, $\delta^2(\delta^{-1}(a_1)) = \delta(a_1)\neq \delta^{-1}(a_1)$.  Therefore $\delta$ doesn't have order $2$.
\end{proof}

As a more complicated observation, we have the following.
\begin{lem} \label{lem:b1b2c1c2}
Assume $R$ exists.  Then 
$$b_1\in \{\delta(a_2),\eta_1(a_2)\},\quad\text{and}\quad c_1\in \{\delta^{-1}(a_2),\eta_1^{-1}(a_2)\},$$
and also
$$b_2\in \{\delta(a_1),\eta_2(a_1)\},\quad\text{and}\quad c_2\in \{\delta^{-1}(a_1),\eta_2^{-1}(a_1)\}.$$
\end{lem}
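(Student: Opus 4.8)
The plan is to exploit the six-element sets $E$ and $F$ from Lemma \ref{lem:allsix}, together with the constraint that certain transpositions $\tau$ must keep $\sum_{\sigma\in R} n(\sigma;\tau)$ even. Recall that $\delta(b_j) = c_j$ and $\eta_j(b_j) = c_j$, that $a_j$ is the fixed point of $\eta_j$, and that $c_j$ is the fixed point of $\eta_j\delta^{-1}$. The key observation is that $b_1$ and $c_1$ are ``new'' points relative to the configuration around $a_1$, so they should be forced to lie among the $\delta$- and $\eta_2$-neighbors of $a_1$ or $a_2$. More precisely, I expect $b_1$ and $c_1$ to be expressible in terms of $a_2$ (since $\eta_1$ acts nontrivially on everything except $a_1$, and the orbit structure near $a_1$ is already ``used up'').

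**Key steps.**

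First I would pin down $b_1$. Suppose toward a contradiction that $b_1 \notin \{\delta(a_2), \eta_1(a_2)\}$; since $E$ has six elements, the candidate value $b_1$ must avoid being forced into the set $E$ in a bad way. Concretely, consider a transposition $\tau$ moving $a_1$ to a cleverly chosen point: the goal is to arrange that $\delta\tau$ and $\eta_2\tau$ become derangements (using that their only fixed points involve $a_1$ and points in $E$), while $\eta_1\tau$ retains exactly one fixed point (namely $a_1$ is not fixed by $\eta_1$, so we need $\tau$ to not create or destroy a fixed point of $\eta_1$ except by controlled means). Then $\sum_{\sigma\in R}n(\sigma;\tau)$ would be $0 + 0 + 0 + 1 = 1$ or similar odd value, contradicting evenness. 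The catch is which point to move $a_1$ to: I would use the fact that $b_1$ is a fixed point of neither $\eta_1$ nor $\delta$ unless $\delta(b_1) = c_1$ collides with the $a_1$-configuration, and translate this into a statement that $b_1 \in \{\delta(a_2), \eta_1(a_2)\}$. The argument for $c_1 \in \{\delta^{-1}(a_2), \eta_1^{-1}(a_2)\}$ is symmetric, applying the same reasoning to $R\delta^{-1}$ or to inverses, using that $c_j = \delta(b_j)$. The statements for $b_2$ and $c_2$ follow by the symmetry $1\leftrightarrow 2$, swapping the roles of $\eta_1$ and $\eta_2$ (and correspondingly swapping $E$ with $F$).

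**Main obstacle.**

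The hard part will be the bookkeeping of exactly which transposition $\tau = (a_1 \enspace q)$ (or $\tau = (a_2 \enspace q)$) to use and verifying that it simultaneously turns two of the four permutations in $R$ into genuine derangements while leaving the fixed-point count of the third permutation odd — the counting must land on an odd total. This requires carefully tracking, for each of $e$, $\delta$, $\eta_1$, $\eta_2$, how composing with $\tau$ changes the fixed-point set, and the analysis splits into cases according to where $q$ sits relative to the cycles of $\delta$ and the near-fixed structure of the $\eta_j$. The six-element conclusions of Lemma \ref{lem:allsix} are precisely what guarantee enough ``room'' to choose such a $q$ when the desired containment fails, so the proof is really a contrapositive: failure of $b_1 \in \{\delta(a_2), \eta_1(a_2)\}$ produces a valid bad transposition, contradicting the third bullet in the properties of $R$.
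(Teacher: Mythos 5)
Your plan misses the one observation that makes this lemma essentially immediate from Lemma \ref{lem:allsix}: the sets $E$ and $F$ are six-element subsets of the six-symbol set $\{1,\dots,6\}$, hence each is \emph{all} of $\{1,\dots,6\}$. Consequently $b_1$ automatically lies in $F=\{\delta(a_2),\delta^{-1}(a_2),\eta_1(a_2),\eta_1^{-1}(a_2),a_1,a_2\}$ (note that the relevant set for $b_1$ is $F$, not $E$ as in your sketch), and the proof reduces to striking out four of the six candidates: $b_1\neq a_1$ and $b_1\neq a_2$ because $D$ has six elements, and if $b_1=\delta^{-1}(a_2)$ or $b_1=\eta_1^{-1}(a_2)$ then $c_1=\delta(b_1)=a_2$, respectively $c_1=\eta_1(b_1)=a_2$, again contradicting the distinctness of the elements of $D$. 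This leaves $b_1\in\{\delta(a_2),\eta_1(a_2)\}$; the statements for $c_1$, $b_2$, $c_2$ follow in the same way.

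Your proposed mechanism instead tries to manufacture a new parity-violating transposition directly from the assumption $b_1\notin\{\delta(a_2),\eta_1(a_2)\}$. But once Lemma \ref{lem:allsix} is in hand, the negation of the conclusion is not ``some set is too small''; it is ``$b_1\in\{a_1,a_2,\delta^{-1}(a_2),\eta_1^{-1}(a_2)\}$,'' and each of these cases contradicts distinctness facts already established (via $D$), not the evenness condition directly. You never specify which transposition $\tau=(a_1\enspace q)$ would work in these cases, nor verify that it produces an odd total, and you explicitly defer this bookkeeping as ``the hard part.'' As written the argument does not close; it amounts to re-deriving the content of Lemma \ref{lem:allsix} from scratch rather than using its conclusion, and the pigeonhole step that actually delivers the lemma is absent.
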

\begin{proof}
From Lemma \ref{lem:allsix}, $b_2\in E$, and $b_2\neq a_1$ and $b_2\neq a_2$.
Moreover, if $b_2=\delta^{-1}(a_1)$ or $b_2=\eta_2^{-1}(a_1)$, then $c_2 = a_1$.
This is a contradiction, so $b_2\in \{\delta(a_1),\eta_2(a_1)\}$.
The other statements are obtained in a similar fashion.
\end{proof}
The previous lemma can be further refined. 
\begin{lem} \label{lem:sigmaaction}
We must have $\delta(a_1) = b_2$ or $\delta(c_2) = a_1$; and $\delta(a_2) = b_1$ or $\delta(c_1) = a_2$.
\end{lem}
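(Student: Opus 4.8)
The plan is to exploit Lemma \ref{lem:b1b2c1c2} together with the six-element sets $E$, $F$, $G$, $H$ from Lemma \ref{lem:allsix}, pushing the case analysis one step further. By Lemma \ref{lem:b1b2c1c2} we already know $b_2\in\{\delta(a_1),\eta_2(a_1)\}$ and $c_2\in\{\delta^{-1}(a_1),\eta_2^{-1}(a_1)\}$, so the first assertion $\delta(a_1)=b_2$ or $\delta(c_2)=a_1$ fails only in the single remaining case $b_2=\eta_2(a_1)$ and $c_2=\eta_2^{-1}(a_1)$. First I would assume this configuration for contradiction: $b_2=\eta_2(a_1)$, $c_2=\eta_2^{-1}(a_1)$, and recall the defining relations $\delta(b_2)=c_2$, $\eta_2(b_2)=c_2$, $\eta_2(a_1)=a_1$ — wait, $a_1$ is the fixed point of $\eta_2$, so actually I must be careful: let me re-read. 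Indeed $a_1$ and $a_2$ are the fixed points of $\eta_1$ and $\eta_2$ respectively... no, the paper says ``$a_j$ and $c_j$ be the fixed points of $\eta_j$ and $\eta_j\delta^{-1}$'', so $a_2$ is the fixed point of $\eta_2$. So the ``bad'' case is $b_2=\eta_2(a_1)$ with $a_1\neq a_2$, which is consistent, and then $c_2=\eta_2(b_2)=\eta_2^2(a_1)$, forcing (since also $c_2\in\{\delta^{-1}(a_1),\eta_2^{-1}(a_1)\}$) that $\eta_2^2(a_1)=\eta_2^{-1}(a_1)$, i.e. $\eta_2^3(a_1)=a_1$, so $a_1$ lies on a $3$-cycle of $\eta_2$.

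The key step is then to produce a transposition $\tau$ making $\sum_{\sigma\in R}n(\sigma;\tau)$ odd, contradicting the third bullet property of $R$. The natural candidate is $\tau=(a_1\ \eta_2(a_1))=(a_1\ b_2)$: this transposition will destroy the fixed point $a_2$ of $\eta_2$ only if $a_2\in\{a_1,b_2\}$, which it is not, so $\eta_2\tau$ still has exactly one fixed point; meanwhile I would compute the fixed-point parities of $e\tau=\tau$ (which has $4$ fixed points in $S_6$, even), $\delta\tau$, and $\eta_1\tau$, using the constraints that $a_1,b_2,c_2=\eta_2^{-1}(a_1)$ are three distinct vertices cyclically permuted by $\eta_2$, and that $b_2=\delta^{-1}(c_2)$. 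The point is that swapping two well-chosen elements on the $\eta_2$-$3$-cycle changes the fixed-point count of $\delta$ by an odd amount relative to the total, exactly as in the $4$-cycle argument for $X_4$; one then checks the sum is odd. The second assertion $\delta(a_2)=b_1$ or $\delta(c_1)=a_2$ follows by the symmetric argument with the roles of the indices $1$ and $2$ interchanged (equivalently, apply the first assertion to a relabeled copy of $R$), so it suffices to treat one.

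The main obstacle I anticipate is the bookkeeping: unlike the $X_4$ case where the ambient permutations have very few possible cycle types, in $S_6$ I cannot fully pin down $\delta$, $\eta_1$, $\eta_2$, so I must argue the parity of $n(\delta;\tau)$ and $n(\eta_1;\tau)$ abstractly from the incidence relations among $a_1,a_2,b_1,b_2,c_1,c_2$ and their images under $\delta^{\pm1},\eta_j^{\pm1}$ — precisely the data controlled by the six-element sets $E,F,G,H$. I expect the cleanest route is: transposing two elements of a permutation changes its fixed-point count by $0$ or $\pm 2$ unless exactly one of the two swapped points is fixed or gets sent to the other, so I need to track, for the chosen $\tau$, whether $\tau$ ``repairs'' or ``breaks'' fixed points of $\delta$ and $\eta_1$, and the six-element hypotheses guarantee enough distinctness that the total count comes out odd. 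Making that last parity computation airtight — choosing the right $\tau$ and verifying all four summands — is the crux, and I would organize it as a short lemma of the same flavor as the contradiction step inside the proof of Lemma \ref{lem:allsix}.
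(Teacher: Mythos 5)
Your reduction is fine up to a point: by Lemma \ref{lem:b1b2c1c2} the only way the first assertion can fail is $b_2=\eta_2(a_1)$ and $c_2=\eta_2^{-1}(a_1)$, and then $\eta_2(b_2)=c_2$ forces the $3$-cycle $(c_2\ a_1\ b_2)$ inside $\eta_2$ --- this is exactly the paper's first step. The gap is in how you propose to finish. Your candidate $\tau=(a_1\ b_2)$ yields no contradiction: contrary to your claim, $\eta_2\tau^{-1}$ does \emph{not} have exactly one fixed point, because multiplying by $(a_1\ b_2)$ creates a new fixed point at $b_2$ precisely when $\eta_2(a_1)=b_2$, which is the very hypothesis of the bad case; so $n(\eta_2;\tau)=2$. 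Meanwhile $n(e;\tau)=4$, $n(\delta;\tau)=0$ (since $\delta(b_2)=c_2\neq a_1$ and $\delta(a_1)\neq b_2$ by assumption), and $n(\eta_1;\tau)=0$ (since $\eta_1(a_1)=a_1$ and injectivity rule out both $\eta_1(b_2)=a_1$ and $\eta_1(a_1)=b_2$). The total is $6\equiv 0\pmod 2$, so the parity condition is satisfied and the contradiction you hope for simply is not there. You acknowledge the parity computation is ``the crux'' and leave it unverified; as it stands, the one concrete choice you make fails.

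The paper closes the argument structurally rather than by a parity check. In the bad case $\eta_2$ is a near-derangement containing a $3$-cycle, hence a $2,3$-cycle, and its $2$-cycle is the complement of $\{a_1,a_2,b_2,c_2\}$, which by the six-element sets $E$ and $D$ of Lemma \ref{lem:allsix} must equal both $\{\delta(a_1),\delta^{-1}(a_1)\}$ and $\{b_1,c_1\}$. Hence $\eta_2(b_1)=c_1=\eta_1(b_1)$, so $\eta_1\eta_2^{-1}$ fixes $c_1$, contradicting that it is a derangement. If you want to rescue your approach you would need either a different test permutation $\tau$ or, more likely, to bring in the derangement/near-derangement constraints on $\eta_1\eta_2^{-1}$ and $\eta_j\delta^{-1}$ directly, as the paper does; the parity conditions alone, at least for the $\tau$ you chose, are consistent with the configuration you are trying to exclude.
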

\begin{proof}
Suppose $\delta(a_1) \neq b_2$ and $\delta(c_2) \neq a_1$.
Then, by Lemma \ref{lem:b1b2c1c2},  $\eta_2(a_1) = b_2$ and $\eta_2(c_2) = a_1$.  It follows that $\eta_2$ has the three cycle $(c_2 \enspace a_1 \enspace b_2)$, forcing $\eta_2$ to be a $2,3$-cycle.  Now, by Lemma \ref{lem:allsix}, $b_1 \neq c_2$ and $c_1 \neq b_2$, so $\{b_1, c_1\} = \{ \delta(a_1), \delta^{-1}(a_1)\}$. But this means, $\eta_2$ maps $b_1$ to $c_1$, which contradicts $\eta_2\eta_1^{-1}$ being a derangement.
The other statement of the lemma is proved similarly.
\end{proof}

\begin{remk} \label{remk:sigmaa1} This lemma allows us to make a final simplifying assumption.  By swapping $\eta_1$ and $\eta_2$, and possibly by inverting all the elements of $R$, we may assume $b_2 = \delta(a_1)$.
\end{remk}

\begin{cor}
Assume $R$ exists.  Then 
the derangement $\delta$ cannot be a $2,4$-cycle.
\end{cor}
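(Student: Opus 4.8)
The plan is to run the same elementary style of argument that eliminated a $4$-cycle in $X_4$, but now exploiting the rigid chain structure that $\delta$ inherits from the reductions already in place. By Lemma \ref{lem:allsix} the six points $a_1,a_2,b_1,b_2,c_1,c_2$ are pairwise distinct, so they are exactly $\{1,\dots,6\}$; and by Remark \ref{remk:sigmaa1} we may assume $b_2=\delta(a_1)$. Together with the defining relations $\delta(b_2)=c_2$ and $\delta(b_1)=c_1$ this says that $\delta$ sends $a_1\mapsto b_2\mapsto c_2$ and $b_1\mapsto c_1$, all of these points being distinct. The first thing I would record is that the three distinct points $a_1,b_2,c_2$ lie on a single cycle of $\delta$ of length at least three. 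Hence, if $\delta$ were a $2,4$-cycle, that cycle would have to be the $4$-cycle, so $\delta$ has $4$-cycle $(a_1\enspace b_2\enspace c_2\enspace z)$, where $z$ is the one element of $\{a_2,b_1,c_1\}$ not in the transposition part of $\delta$ and the transposition acts on the other two elements of $\{a_2,b_1,c_1\}$.

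Next I would dispatch the three possibilities for $z$. If $z=b_1$, then reading off the $4$-cycle gives $\delta(b_1)=a_1$, contradicting $\delta(b_1)=c_1$ since $a_1\neq c_1$. If $z=c_1$, then $b_1$ lies in the transposition, necessarily paired with $a_2$, so $\delta(b_1)=a_2$, again contradicting $\delta(b_1)=c_1$. In the remaining case $z=a_2$, the $4$-cycle gives $\delta(a_2)=a_1$ and the transposition is $(b_1\enspace c_1)$, so $\delta(c_1)=b_1$; but Lemma \ref{lem:sigmaaction} forces $\delta(a_2)=b_1$ or $\delta(c_1)=a_2$, and neither holds because the six points are distinct. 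This exhausts the cases, so $\delta$ cannot be a $2,4$-cycle.

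The only steps that need any care are confirming that $a_1,b_2,c_2$ genuinely all sit in the $4$-cycle rather than being split across the two cycles — which is forced by their being three distinct elements linked consecutively by $\delta$ — and the bookkeeping in the case $z=a_2$, which is the sole place where distinctness of the six points is not by itself enough and one must invoke Lemma \ref{lem:sigmaaction}. Everything else is immediate; in particular, beyond what is already packaged inside Lemmas \ref{lem:allsix} and \ref{lem:sigmaaction}, no further use of the parity hypothesis on $\sum_{\sigma\in R}n(\sigma;\tau)$ is required.
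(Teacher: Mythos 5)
Your proof is correct. It differs from the paper's in the ingredients it draws on, though both are elementary cycle-structure arguments built on the same preliminary lemmas. The paper uses the six-element sets $E$ and $F$ to conclude that neither $a_1$ nor $a_2$ can lie on a $2$-cycle of $\delta$, so both sit in the $4$-cycle, which (together with $b_2=\delta(a_1)$ from Remark \ref{remk:sigmaa1}) pins the $4$-cycle down as $(\delta^{-1}(a_1)\enspace a_1\enspace \delta(a_1)\enspace a_2)$ and yields the single contradiction $a_2=\delta^2(a_1)=\delta(b_2)=c_2$, violating the distinctness of $D$. You instead use only $D$ and the chain $a_1\mapsto b_2\mapsto c_2$, $b_1\mapsto c_1$ to force $a_1,b_2,c_2$ into the $4$-cycle, and then run a three-way case analysis on the fourth entry $z\in\{a_2,b_1,c_1\}$; two cases die on $\delta(b_1)=c_1$ alone, and the last case ($z=a_2$) genuinely needs Lemma \ref{lem:sigmaaction}, which the paper's proof of this corollary does not invoke. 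The paper's route is shorter (one case, no appeal to Lemma \ref{lem:sigmaaction}); yours trades that for never needing the distinctness of $E$ and $F$, relying only on $D$, Remark \ref{remk:sigmaa1}, and Lemma \ref{lem:sigmaaction}. Your closing observation that the parity hypothesis enters only through the cited lemmas is accurate and applies equally to the paper's argument.
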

\begin{proof}
Suppose $\delta$ is a $2,4$-cycle.  Since $E$ and $F$ have six elements, both $a_1$ and $a_2$ must belong to the $4$-cycle of $\delta$.
Consequently
$$\delta = (\delta^{-1}(a_1) \enspace a_1 \enspace \delta(a_1) \enspace a_2)(\eta_2^{-1}(a_1) \enspace \eta_2(a_1)).$$
However, since $b_2 = \delta(a_1)$ (see Remark \ref{remk:sigmaa1}), this would imply $a_2 = c_2$, which is a contradiction.
\end{proof}

Now we remark that if $R$ has the properties described above, so too do $R\delta^{-1}$, $R\eta_1^{-1}$, and $R\eta_2^{-1}$. We leverage these facts now.
\begin{cor}
Assume $R$ exists.  Then 
the derangement $\delta$ cannot be a $3,3$-cycle.
\end{cor}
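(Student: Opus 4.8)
The plan is to rule out the case that $\delta$ is a $3,3$-cycle by exploiting the three translated copies $R$, $R\delta^{-1}$, $R\eta_1^{-1}$, $R\eta_2^{-1}$ together with the six-element sets furnished by Lemma~\ref{lem:allsix}. The key structural fact I would extract first is that, by Remark~\ref{remk:sigmaa1}, $b_2=\delta(a_1)$, so $a_1$ and $\delta(a_1)=b_2$ lie in a common $3$-cycle of $\delta$, say $\delta$ contains the $3$-cycle $(a_1 \enspace b_2 \enspace \delta(b_2))$ with $\delta(b_2) = \delta^{-1}(a_1)$. Since $E$ must have six elements, $a_2$ cannot lie in this same $3$-cycle (otherwise $\delta(a_1),\delta^{-1}(a_1),a_1,a_2$ would only exhaust three distinct values inside the $3$-cycle), so $a_2$ lies in the \emph{other} $3$-cycle of $\delta$. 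Then, by Lemma~\ref{lem:sigmaaction}, either $\delta(a_2)=b_1$ or $\delta(c_1)=a_2$.

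Next I would analyze these two subcases. In the first subcase, $\delta(a_2)=b_1$, so $a_2$ and $b_1$ share a $3$-cycle of $\delta$, which combined with the observation that $c_2=\delta(b_2)=\delta^{-1}(a_1)$ (forced by Lemma~\ref{lem:b1b2c1c2} once $b_2=\delta(a_1)$, since $\eta_2(a_1)=b_2$ is impossible under Remark~\ref{remk:sigmaa1}'s normalization and the distinctness from Lemma~\ref{lem:allsix}) pins down enough of $\delta$ that I can read off $c_1=\delta^{-1}(a_2)$ similarly, and then the set $D=\{a_1,a_2,b_1,b_2,c_1,c_2\}$ becomes a union of the two $3$-cycles of $\delta$. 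I would then apply the six-element property of $F$ (or $G$, $H$) to this configuration: $F=\{\delta(a_2),\delta^{-1}(a_2),\eta_1(a_2),\eta_1^{-1}(a_2),a_1,a_2\}$ now has $\delta(a_2)=b_1$ and $\delta^{-1}(a_2)=c_1$ both already in the $3$-cycle through $a_2$, which forces $\eta_1(a_2)$ and $\eta_1^{-1}(a_2)$ to be $a_1$ or $b_2$ or $\delta(b_2)$ — and checking each possibility against the constraint that $\eta_1$ is a near-derangement with fixed point $a_1$, and that $\eta_1\eta_2^{-1}$ is a derangement, should yield a contradiction. In the second subcase $\delta(c_1)=a_2$, the analysis is symmetric after passing to $R\delta^{-1}$ (whose associated derangement is $\delta^{-1}$, still a $3,3$-cycle, with $\widetilde a_j = c_j$, etc.), so I would reduce it to the first subcase or run the same elimination directly.

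The cleanest route, which I would try first, is to use all of $E$, $F$, $G$, $H$ simultaneously as a system of distinctness constraints on a $3,3$-cycle $\delta$ together with near-derangements $\eta_1,\eta_2$ whose fixed-point pattern is governed by Lemmas~\ref{lem:b1b2c1c2} and~\ref{lem:sigmaaction}: a $3,3$-cycle has only so many elements, and requiring four different six-element sets to all be distinct while their generators are constrained to lie among the $\delta$-orbit of $a_1$, $a_2$ leaves essentially no room. Concretely, once $b_2=\delta(a_1)$, $c_2=\delta^{-1}(a_1)$, and $a_2$ is in the other $3$-cycle, the set $G=\{c_1,c_2,b_2,\delta(c_2),\eta_1(b_2),\delta(\eta_1^{-1}(c_2))\}$ already contains $c_2=\delta^{-1}(a_1)$, $b_2=\delta(a_1)$, and $\delta(c_2)=a_1$ — three elements of the $a_1$-cycle — so the remaining three, $c_1$, $\eta_1(b_2)$, $\delta(\eta_1^{-1}(c_2))$, must be exactly the $a_2$-cycle; tracking what this forces $\eta_1$ to do and colliding it with $\eta_1$ fixing $a_1$ (hence $a_1\notin\{$non-fixed images$\}$) and with $\eta_1\delta^{-1}$ having fixed point $c_1$ should close the case.

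**Main obstacle.** The hard part will be the bookkeeping: a $3,3$-cycle $\delta$ on six symbols has many labelings, and $\eta_1,\eta_2$ each have two candidate behaviors from Lemma~\ref{lem:b1b2c1c2}, so naively there is a modest case explosion. The art will be in choosing which of $E,F,G,H$ to apply in which order so that each application either forces a value or produces an immediate contradiction with the near-derangement conditions or the derangement condition on $\eta_1\eta_2^{-1}$, rather than branching. I expect that using $G$ and $H$ (which already "use up" three elements of each $3$-cycle by construction once the normalization $b_2=\delta(a_1)$ is in force) will be the decisive move, since they most tightly constrain where $\eta_1,\eta_2$ can send the $a_j$'s.
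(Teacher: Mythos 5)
Your setup (the normalization $b_2=\delta(a_1)$ from Remark \ref{remk:sigmaa1}, hence a $3$-cycle $(a_1\ b_2\ c_2)$ with $c_2=\delta^{-1}(a_1)$, and $a_2$ forced into the other $3$-cycle) is correct, and your plan is in the same spirit as the paper's proof, which rests on the distinctness of $b_2$, $c_2$, $\delta(c_2)$ inside $G$. The problem is that the contradiction you intend to extract does not exist: every constraint you propose to use --- the six-element sets $D,E,F,G,H$ of Lemma \ref{lem:allsix}, Lemmas \ref{lem:b1b2c1c2} and \ref{lem:sigmaaction}, the near-derangement conditions on $\eta_1,\eta_2,\eta_1\delta^{-1},\eta_2\delta^{-1}$, the derangement condition on $\eta_1\eta_2^{-1}$, and even the full parity condition $\sum_{\sigma\in R}n(\sigma;\tau)\equiv 0\pmod 2$ for \emph{all} $\tau\in S_6$ --- is satisfied by a configuration in which $\delta$ is a $3,3$-cycle. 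Take $\delta=(1\,2\,3)(4\,5\,6)$, $\eta_1=(2\,5\,6\,3\,4)$ (fixing $1$), $\eta_2=(1\,5\,2\,3\,6)$ (fixing $4$). Direct computation gives that $\eta_1\delta^{-1}$ fixes only $6$, $\eta_2\delta^{-1}$ fixes only $3$, and $\eta_1\eta_2^{-1}$ is a derangement; so $a_1=1$, $b_2=2=\delta(a_1)$, $c_2=3$, $a_2=4$, $b_1=5$, $c_1=6$, the normalization of Remark \ref{remk:sigmaa1} holds, and each of $D,E,F,G,H$ equals all of $\{1,\dots,6\}$ (in $G$, the element $\delta(c_2)=a_1$ is simply the sixth member; nothing forbids $a_1\in G$). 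The parity condition holds for every $\tau$ because the multiset $\{\sigma(j):\sigma\in R\}$ reduces mod $2$ to $\{2,5\}$ for $j\in\{1,2,4,5\}$ and to $\{1,3,4,6\}$ for $j\in\{3,6\}$, so that $\sum_{\sigma\in R}n(\sigma;\tau)\equiv (2-m)+(2-m)\pmod 2$ where $m=\#\{j\in\{3,6\}:\tau(j)\in\{2,5\}\}$.

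So the gap is not bookkeeping, and no ordering of $E,F,G,H$ will close the case: your ``cleanest route'' terminates exactly where this example sits ($c_1$, $\eta_1(b_2)$, $\delta(\eta_1^{-1}(c_2))$ fill out the $a_2$-cycle, consistently with $\eta_1$ fixing $a_1$ and $\eta_1\delta^{-1}$ fixing $c_1$). The same example shows that the inference in the paper's own proof (``$\delta(a_1),\delta^2(a_1),\delta^3(a_1)$ are distinct elements of $G$, therefore $a_1$ is not in a $3$-cycle'') does not follow as written, since these three elements are pairwise distinct even when $\delta^3(a_1)=a_1$. To rule out the $3,3$-cycle you must import information not captured by the mod-$2$ obstruction on the abstract four-element set $R$ --- for instance the mod-$3$ obstruction stated alongside it, or additional structure coming from the fact that $R$ lies inside an actual pair of disconnected maximal cliques.
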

\begin{proof}


In Remark \ref{remk:sigmaa1}, we assumed $b_2 = \delta(a_1)$, so that $c_2 = \delta^2(a_1)$, and $\delta(c_2) = \delta^3(a_1)$ are all different in $G$.
Therefore $a_1$ is not part of a $3$-cycle in $\delta$, so $\delta$ cannot be a $3,3$-cycle.
\end{proof}


\begin{cor}
Assume $R$ exists.  Then 
the derangement $\delta$ cannot be a $6$-cycle.
\end{cor}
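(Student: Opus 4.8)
The plan is to exploit that the four preceding corollaries have already eliminated the cycle types $2,2,2$, $2,4$, and $3,3$, so that the $6$-cycle is the \emph{only} remaining possibility for a derangement of $S_6$. Hence, once we rule it out, no derangement $\delta$ with the listed properties exists, so no set $R$ exists, and therefore (by the reductions of the previous subsection) $X_6$ has no disconnected pair of maximal cliques, i.e., there is no pair of orthogonal $6\times 6$ Latin squares. Thus the whole argument comes down to deriving a contradiction from the assumption that $\delta$ is a $6$-cycle.

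First I would fix coordinates: write $\delta = (x_0\,x_1\,x_2\,x_3\,x_4\,x_5)$ with $\delta(x_i) = x_{i+1}$ (indices mod $6$) and, using Remark~\ref{remk:sigmaa1}, choose the labeling so that $a_1 = x_0$; then $b_2 = \delta(a_1) = x_1$ and $c_2 = \delta(b_2) = x_2$. Now apply the second alternative of Lemma~\ref{lem:sigmaaction}: either $\delta(a_2) = b_1$ (\textbf{Case A}) or $\delta(c_1) = a_2$ (\textbf{Case B}). Since $\delta(b_1) = c_1$ always, Case A gives $b_1 = \delta(a_2)$, $c_1 = \delta^2(a_2)$, and Case B gives $c_1 = \delta^{-1}(a_2)$, $b_1 = \delta^{-2}(a_2)$; in either case $a_2,b_1,c_1$ occupy three cyclically consecutive positions of $\delta$. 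Because $D = \{a_1,a_2,b_1,b_2,c_1,c_2\}$ has six elements by Lemma~\ref{lem:allsix}, those three positions must be the complement $\{x_3,x_4,x_5\}$ of $\{x_0,x_1,x_2\}$, forcing $a_2 = x_3$, $b_1 = x_4$, $c_1 = x_5$ in Case A, and $a_2 = x_5$, $c_1 = x_4$, $b_1 = x_3$ in Case B. But in Case B this gives $a_2 = x_5 = \delta^{-1}(a_1)$, so the list $\{\delta(a_1),\delta^{-1}(a_1),\eta_2(a_1),\eta_2^{-1}(a_1),a_1,a_2\}$ defining $E$ repeats an element and $|E| < 6$, contradicting Lemma~\ref{lem:allsix}. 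So only Case A survives.

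In Case A I would reconstruct $\eta_1$ as far as needed. We know $\eta_1(x_0) = a_1 = x_0$ and $\eta_1(x_4) = \eta_1(b_1) = c_1 = x_5$. Since $F = \{\delta(a_2),\delta^{-1}(a_2),\eta_1(a_2),\eta_1^{-1}(a_2),a_1,a_2\}$ has six elements and here $\delta(a_2) = x_4$, $\delta^{-1}(a_2) = x_2$, $a_1 = x_0$, $a_2 = x_3$, the pair $\{\eta_1(x_3),\eta_1^{-1}(x_3)\}$ must equal $\{x_1,x_5\}$; injectivity of $\eta_1$ together with $\eta_1(x_4)=x_5$ forbids $\eta_1(x_3) = x_5$, so $\eta_1(x_3) = x_1$ and $\eta_1(x_5) = x_3$. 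Only $\eta_1(x_1)$ and $\eta_1(x_2)$ are left, and they take the values $x_2$ and $x_4$ in some order; I would then check that both options make $G = \{c_1,c_2,b_2,\delta(c_2),\eta_1(b_2),\delta(\eta_1^{-1}(c_2))\}$ collapse: if $\eta_1(x_1) = x_2$ then $\eta_1(b_2) = c_2$, and if $\eta_1(x_1) = x_4$ then $\eta_1^{-1}(c_2) = x_2$, so $\delta(\eta_1^{-1}(c_2)) = x_3 = \delta(c_2)$. Either way $|G| < 6$, contradicting Lemma~\ref{lem:allsix}. (Alternatively, the near-derangement condition already pins $\eta_1$ and $\eta_2$ down to explicit $5$-cycles, and one computes directly that $\eta_1\eta_2^{-1}$ fixes a point, contradicting that it is a derangement.)

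The main obstacle here is not a single deep idea but keeping the case analysis under control: the real content is that Remark~\ref{remk:sigmaa1} together with Lemma~\ref{lem:sigmaaction} reduce everything to Cases A and B, after which the ``six distinct elements'' conditions of Lemma~\ref{lem:allsix} for $D$, $E$, $F$, and $G$, combined with injectivity of $\eta_1$, leave essentially no freedom. The one spot needing care is deciding which member of each ambiguous pair (such as $\{\eta_1(x_3),\eta_1^{-1}(x_3)\}$) is forced; everything else is routine bookkeeping on the six-cycle.
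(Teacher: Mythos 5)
Your proof is correct, and it takes a genuinely different route from the paper's. The paper's argument first applies the translate trick to $R\eta_2^{-1}$ to conclude that $\eta_1\eta_2^{-1}$ must also be a $6$-cycle, deduces from parity that one of $\eta_1,\eta_2$ is a $2,3$-cycle and the other a $5$-cycle, and then works through a longer computation tracking $b_1$ until no position remains for it. You instead fix coordinates on the $6$-cycle, use Remark \ref{remk:sigmaa1} together with the second disjunction of Lemma \ref{lem:sigmaaction} to place $a_2,b_1,c_1$ as three consecutive positions, and then let $|D|=6$ force the entire configuration $a_1=x_0,\dots,c_1=x_5$ (with Case B dying immediately against $|E|=6$); from there $|F|=6$ and injectivity determine $\eta_1$ up to one binary choice, and both choices collapse $G$. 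This buys a shorter, more self-contained argument: you never need the translate $R\eta_2^{-1}$, the parity/cycle-type analysis of $\eta_1$ and $\eta_2$, or the set $H$, and the case analysis is visibly exhaustive. Two small remarks: your option $\eta_1(x_2)=x_2$ is already impossible because $\eta_1$ would then have two fixed points, and your option $\eta_1(x_1)=x_2$ already forces $\eta_1\delta^{-1}$ to fix both $x_2$ and $x_5$; so the contradictions arrive even before consulting $G$, though your route through $|G|<6$ is equally valid and keeps the argument uniform with Lemma \ref{lem:allsix}.
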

\begin{proof}
Suppose that $\delta$ is a $6$-cycle.
The set $R\eta_2^{-1}$ has the same properties as $R$, and contains the derangement $\eta_1\eta_2^{-1}$.
The above arguments then imply $\eta_1\eta_2^{-1}$ must be a $6$-cycle.
This implies that $\eta_1$ and $\eta_2$ have to have opposite signs.
Therefore one must be a $2,3$-cycle and the other must be a $5$-cycle.


If $\eta_j$ is type $2,3$ and $\eta_k$ is type $5$, then the fact that $E$ and $F$ both have six elements implies $\eta_j = (\eta_{j}^{-1}(a_k) \enspace a_k \enspace \eta_j(a_k))(\delta(a_k) \enspace \delta^{-1}(a_k))$.
Recall that $b_2 = \delta(a_1)$.  Therefore if $j=2$, our expression for $\eta_j$ says that $\delta^2(a_1) = \delta(b_2) = \eta_2(b_2) = \delta^{-1}(a_1)$.  This is impossible, since $\delta$ is a $6$-cycle.
Therefore $j=1$. If $\delta(a_2) = b_1$, then $\delta^2(a_2) = \delta(b_1) = \eta_1(b_1) = \eta_1(\delta(a_2)) = \delta^{-1}(a_2)$, which is impossible, as $\delta$ is a 6-cycle. Thus, $\delta(a_2)\neq b_1$, and by Lemma \ref{lem:b1b2c1c2}, $\eta_1(a_2) = b_1$, so $\eta_1^{-1}(a_2) = c_1$, making
$$\eta_1 = (c_1 \enspace a_2 \enspace b_1)(\delta(a_2) \enspace \delta^{-1}(a_2)).$$

The $5$-cycle $\eta_2$ is then given by $\eta_2 = (\eta_2^{-1}(a_1) \enspace a_1 \enspace \eta_2(a_1) \enspace x \enspace y)$ for some values $x,y\in\{\delta(a_1),\delta^{-1}(a_1)\}$.
If $\delta(a_1) = x$, then since $\delta(a_1) = b_2$, we would have 
$\delta^{-1}(a_1) = \eta_2(\delta(a_1)) = \eta_2(b_2) = \delta(b_2) = \delta^2(a_1)$,
which is impossible since $\delta$ has order $6$.
Consequently, the only remaining possibility is that $\eta_2 = (\eta_2^{-1}(a_1) \enspace a_1 \enspace \eta_2(a_1)\enspace \delta^{-1}(a_1) \enspace \delta(a_1))$.

Now, as $\delta(a_1) = b_2$, and $\eta_2(b_2) = \delta(b_2) = c_2$, we have $\eta_2^{-1}(a_1) = \delta^2(a_1)$. Thus, either $\delta = (\delta^{-1}(a_1) \enspace a_1 \enspace \delta(a_1) \enspace \delta^{-1}(a_2) \enspace a_2 \enspace \delta(a_2) )$, or $\delta = (\delta^{-1}(a_1) \enspace a_1 \enspace \delta(a_1) \enspace \delta^{-2}(a_2) \enspace \delta^{-1}(a_2) \enspace a_2 )$. The latter is impossible, as then, from $E$, $\delta^{-1}(a_2) = \eta_2(a_1)$, and so $\eta_2^2(a_1) = \delta^{-1}(a_1) = \delta(a_2) = \eta_1(\delta^{-1}(a_2)) =  \eta_1(\eta_2(a_1))$, which contradicts $\eta_1\eta_2^{-1}$ being a derangement.

Finally then, we consider $b_1$. Observe, $b_1 \neq a_1$, $b_1 \neq b_2 = \delta(a_1)$, $b_1 \neq c_2 = \delta(b_2) = \delta^2(a_1)$, and $b_1 \neq a_2 = \delta^3(a_1)$. Also, $c_1 \neq a_1$, so $b_1 \neq \delta^{-1}(a_1)$, and from $\eta_1$, $b_1 \neq \delta(a_2) = \delta^{-2}(a_1)$. This leaves no remaining options for $b_1$, which completes the contradiction.



\end{proof}

Summarizing the above results, we've proved the following theorem.
\begin{thm}
There do not exist two disconnected maximal cliques in $X_6$.
\end{thm}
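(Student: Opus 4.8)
The plan is to assume, for contradiction, that two disconnected maximal cliques $C$ and $\wt C$ exist in $X_6$, and to derive a contradiction by showing that no set $R$ with the properties listed at the end of Section~\ref{section:Euler36} can exist. By the modular obstruction of Theorem~\ref{modular obstruction} applied with $N=6$ and $r=2$, there is a proper subset $R\subseteq C\cup\wt C$ with $\sum_{\sigma\in R}n(\sigma;\tau)\equiv 0\pmod 2$ for all $\tau$, and by the reductions already carried out we may take $R=\{e,\delta,\eta_1,\eta_2\}$ with $\delta$ a derangement, $\eta_1,\eta_2$ near-derangements, $\eta_1\delta^{-1},\eta_2\delta^{-1}$ near-derangements, $\eta_1\eta_2^{-1}$ a derangement, and the parity condition on $n(\cdot;\tau)$. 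The whole proof then becomes a finite case analysis on the cycle type of the derangement $\delta\in S_6$.

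The key structural input is Lemma~\ref{lem:allsix}, which forces the six explicit sets $D,E,F,G,H$ to each be all of $\{1,\dots,6\}$; this is what converts the single parity condition into rigid combinatorial constraints on how $\delta$, $\eta_1$, $\eta_2$ interact near the points $a_1,a_2,b_1,b_2,c_1,c_2$. Using it, I would first record the two auxiliary reductions: the refinement in Lemma~\ref{lem:b1b2c1c2} pinning down $b_1,c_1$ (resp.\ $b_2,c_2$) to two options each, and its strengthening in Lemma~\ref{lem:sigmaaction}, which combined with the symmetry of swapping $\eta_1\leftrightarrow\eta_2$ and inverting all elements of $R$ lets us assume $b_2=\delta(a_1)$ (Remark~\ref{remk:sigmaa1}). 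With these in hand, the derangement $\delta$ in $S_6$ has possible cycle types $2,2,2$; $2,4$; $3,3$; and $6$, and I would eliminate each in turn. The $2,2,2$ case dies because $E$ having six elements forces $\delta^2(\delta^{-1}(a_1))\neq\delta^{-1}(a_1)$, so $\delta$ has no order-$2$ element. The $2,4$ case dies because $E,F$ having six elements forces both $a_1,a_2$ into the $4$-cycle, and then $b_2=\delta(a_1)$ forces $a_2=c_2$, a contradiction. The $3,3$ case dies because $b_2=\delta(a_1)$, $c_2=\delta^2(a_1)$, $\delta(c_2)=\delta^3(a_1)$ must all be distinct inside $G$, so $a_1$ cannot lie on a $3$-cycle.

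The main obstacle — and by far the longest case — is $\delta$ a $6$-cycle. Here I would exploit that $R\eta_2^{-1}$ has exactly the same properties as $R$ and contains the derangement $\eta_1\eta_2^{-1}$, so the already-proven eliminations apply to it: $\eta_1\eta_2^{-1}$ must also be a $6$-cycle, which forces $\eta_1,\eta_2$ to have opposite sign, hence one is a $2,3$-cycle and the other a $5$-cycle. I would then use the six-element constraints on $E,F$ to write down the $2,3$-cycle explicitly, rule out $j=2$ using $b_2=\delta(a_1)$ together with $\delta$ having order $6$, and thereby pin $\eta_1=(c_1\ a_2\ b_1)(\delta(a_2)\ \delta^{-1}(a_2))$ (again after checking, via Lemma~\ref{lem:b1b2c1c2}, that $\delta(a_2)\neq b_1$). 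Next I would determine the $5$-cycle $\eta_2$ by elimination on its last two entries $\{\delta(a_1),\delta^{-1}(a_1)\}$, discard the case $\eta_2(b_2)=\delta^{-1}(a_1)$ since it would force $\delta^{-1}(a_1)=\delta^2(a_1)$, and then read off $\eta_2^{-1}(a_1)=\delta^2(a_1)$. This leaves two candidate $6$-cycles for $\delta$; the second is eliminated because it makes $\eta_1\eta_2^{-1}$ fix $\eta_2(a_1)$, contradicting that it is a derangement. For the surviving $\delta=(\delta^{-1}(a_1)\ a_1\ \delta(a_1)\ \delta^{-1}(a_2)\ a_2\ \delta(a_2))$, I would finish by tracking the point $b_1$: it must avoid $a_1,\delta(a_1),\delta^2(a_1),\delta^3(a_1)$ (from the identities already derived), and also $\delta^{-1}(a_1)$ (since $c_1\neq a_1$) and $\delta^{-2}(a_1)$ (from the form of $\eta_1$), exhausting all six points and producing the final contradiction. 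Having eliminated every cycle type of $\delta$, the set $R$ cannot exist, so neither can the disconnected pair $C,\wt C$, which is the theorem.
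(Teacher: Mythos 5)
Your proposal is correct and follows essentially the same route as the paper's own argument: the same reduction to the four-element set $R=\{e,\delta,\eta_1,\eta_2\}$ via the mod-$2$ obstruction, the same structural lemmas forcing $D,E,F,G,H$ to have six elements and the normalization $b_2=\delta(a_1)$, and the same case analysis eliminating each cycle type of $\delta$, with the $6$-cycle case handled by passing to $R\eta_2^{-1}$ and tracking $b_1$ to exhaust all six symbols. (The only quibble is that in discarding the second candidate $6$-cycle, the coincidence $\eta_1(\eta_2(a_1))=\eta_2(\eta_2(a_1))$ makes $\eta_1\eta_2^{-1}$ fix $\eta_2^2(a_1)$ rather than $\eta_2(a_1)$, but the contradiction is the same.)
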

\begin{cor}
There do not exist two $6\times 6$ mutually orthogonal Latin squares.
\end{cor}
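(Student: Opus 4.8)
The plan is to deduce the corollary directly from the preceding theorem together with the bijection $\Gamma$ established earlier. By that bijection, a pair of $6\times 6$ mutually orthogonal Latin squares would correspond to a pair of disconnected ordered maximal cliques in $X_6$. Since the preceding theorem asserts that no two disconnected maximal cliques exist in $X_6$, no such pair of Latin squares can exist either. So at the level of logical structure the proof is a one-line appeal to $\Gamma$.

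First I would recall that $\Gamma$ is a bijection between pairs $(A,B)$ of $N\times N$ orthogonal Latin squares and pairs of disconnected \emph{ordered} maximal cliques in $X_N$; specialize to $N=6$. Second, I would observe that if two $6\times 6$ orthogonal Latin squares $A$ and $B$ existed, then $\Gamma(A,B)$ would be a pair of disconnected maximal cliques $C,\wt C$ in $X_6$. Third, I would invoke the theorem ``There do not exist two disconnected maximal cliques in $X_6$'' to reach a contradiction. One small point worth a remark: the theorem speaks of disconnected maximal cliques as unordered objects, while $\Gamma$ produces ordered ones — but an ordered pair of disconnected maximal cliques is in particular a pair of disconnected maximal cliques, so there is nothing to check here. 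Also the degenerate possibility $C=\wt C$ does not arise, since a clique is never disconnected from itself (every vertex is joined to itself trivially? — no; more simply, distinct members of a single clique are joined by edges, so a clique with more than one vertex cannot be disconnected from itself, and here $|C|=6$).

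I expect there to be essentially no obstacle: all the real work is in the theorem that precedes this corollary, and the corollary is a formal consequence of the correspondence $\Gamma$. The only thing to be careful about in the write-up is to phrase the contrapositive cleanly and to point explicitly at both the bijection and the nonexistence theorem, so that a reader sees the logical chain without having to reconstruct it. Concretely, the proof I would write is: Suppose, for contradiction, that $A$ and $B$ are two $6\times 6$ mutually orthogonal Latin squares. Then $\Gamma(A,B)$ is a pair of disconnected (ordered) maximal cliques in $X_6$, contradicting the previous theorem; hence no such $A,B$ exist.
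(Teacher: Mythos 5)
Your proof is correct and is exactly the argument the paper intends: the corollary is an immediate consequence of the bijection $\Gamma$ between pairs of orthogonal Latin squares and pairs of disconnected ordered maximal cliques, combined with the theorem that no two disconnected maximal cliques exist in $X_6$. Your side remarks about ordering and the degenerate case $C=\wt C$ are fine and do not change the substance.
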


\section*{Acknowledgements}
\thanks{The research of W.R.C. has been supported by an AMS-Simons Research Enhancement Grant, and RSCA intramural grant 0359121 from CSUF; that of Sam Fleyshman and Sara Anderson was supported 
by CSUF's Math Summer Research Program and a Deland Summer Research Fellowship.}

\bibliographystyle{plain}
\bibliography{DCDG}


\end{document}